\theoremstyle{plain}
\newtheorem{thma}{Main Theorem}
\newtheorem{thm}{Theorem}[section]
\newtheorem{prop}[thm]{Proposition}
\newtheorem{cor}[thm]{Corollary}
\theoremstyle{definition}
\newtheorem{defn}[thm]{Definition}
\newtheorem{lem}[thm]{Lemma}
\numberwithin{equation}{thm}
\DeclareMathAlphabet{\mathpzc}{OT1}{pzc}{m}{it}
\DeclareMathOperator{\Kopf}{top}
\DeclareMathOperator{\rad}{rad}
\DeclareMathOperator{\End}{End}
\DeclareMathOperator{\cx}{cx}
\DeclareMathOperator{\Dist}{Dist}
\DeclareMathOperator{\SL}{SL}
\DeclareMathOperator{\Ad}{Ad}
\DeclareMathOperator{\image}{Im}
\DeclareMathOperator{\charac}{char}
\DeclareMathOperator{\St}{St}
\DeclareMathOperator{\modu}{mod}
\DeclareMathOperator{\Hom}{Hom}
\DeclareMathOperator{\Ext}{Ext}
\DeclareMathOperator{\Id}{Id}
\DeclareMathOperator{\Lie}{Lie}
\begin{document}

\title{Representation type of Frobenius-Lusztig kernels}
\author{Julian K\"ulshammer}
\address{Christian-Albrechts-Universit\"at zu Kiel, Ludewig-Meyn-Str. 4, 24098 Kiel, Germany}
\email{kuelshammer@math.uni-kiel.de}
\thanks{Supported by the D.F.G. priority program SPP1388 "Darstellungstheorie"}

\begin{abstract}
In this article we show that almost all blocks of all Frobenius-Lusztig kernels are of wild representation type extending results of Feldvoss and Witherspoon, who proved this result for the principal block of the zeroth Frobenius-Lusztig kernel. Furthermore we verify the conjecture that there are infinitely many Auslander-Reiten components for a finite dimensional algebra of infinite representation type for selfinjective algebras whose cohomology satisfies certain finiteness conditions.
\end{abstract}

\maketitle

\section*{Introduction}

Around 1990, Lusztig defined a finite dimensional quantum group at a root of unity \cite{L90}, called the small quantum group, the restricted quantum group or the Frobenius-Lusztig kernel. It can be seen as a quantum analogue of the first Frobenius kernel of an algebraic group. Generalizing this finite dimensional algebra in characteristic $p$, Drupieski defined quantum analogues of higher Frobenius kernels, which he called  (higher) Frobenius-Lusztig kernels \cite{Dru11}.\\

An important question in the representation theory of finite dimensional algebras is, whether it is possible to classify all the indecomposable modules up to isomorphism. An algebra is said to be of finite representation type if there are only finitely many isoclasses, of tame representation type if it is not representation-finite and the isoclasses in each dimension are parameterized by finitely many one-parameter-families, and of wild representation type if the representation theory of this algebra is at least as complicated as that of any other algebra (see \cite{D80}).\\

In 2009, Feldvoss and Witherspoon provided a generalization of Farnsteiner's Theorem, that certain classes of algebras are wild if there is a module of complexity greater than 2. They applied it to prove that the small quantum group is wild in most cases. However a wild algebra can in general have tame or representation finite blocks. In this paper we prove that this cannot happen for the small quantum group:

\renewcommand{\thethma}{\thickspace \hspace{-0.5em}}

\begin{thma}
Let $\mathfrak{g}$ be a finite dimensional simple complex Lie algebra. Then with the exception of the simple Steinberg block, all blocks of the small quantum group are wild if $\mathfrak{g}\neq \mathfrak{sl}_2$.
\end{thma}

We also generalize this statement to higher Frobenius-Lusztig kernels. In some cases we have to employ different methods to prove this statement and (partially) compute the quiver and relations of these algebras.\\

A related topic is the beginning of the study of the Auslander-Reiten theory of small quantum groups, that we want to investigate in more detail in a forthcoming paper. Auslander-Reiten theory provides a way to partially understand also the representation theory of wild algebras by factoring homomorphisms into irreducible ones (for an introduction, see \cite{ARS95}, \cite{ASS06} or \cite{B95}). To visualize this factorization, a (usually infinite) quiver is associated to an algebra. In \cite[p. 15]{Rei82}, Reiten asks whether a connected Auslander-Reiten quiver yields finite representation type. A well-known stronger conjecture states that there are infinitely many such components if the algebra is of infinite representation type. This conjecture is known to hold if the algebra is tame \cite[Corollary F (1)]{CB88}, if the algebra is hereditary (cf. \cite[Conjectures (3)]{ARS95}), or if the algebra is a block of a cocommutative Hopf algebra whose dual is local \cite[Corollary 3]{Fa00}. Here we note that the proof of the last result generalizes to selfinjective algebras whose cohomology satisfies certain finiteness conditions, i.e. to algebras where support varieties can be defined. In particular this statement holds for the small quantum group, its Borel and nilpotent part. The latter is not a Hopf algebra.\\

The paper is organized as follows: In Section 1 we prove some facts about extensions of modules, when there is a Hopf algebra with a normal subalgebra. In Section 2 we verify the conjecture that there are infinitely many Auslander-Reiten components. In Section 3 we recall some general statements on Frobenius-Lusztig kernels. In Section 4 we are concerned with the representation type of the Borel and nilpotent part of the quantum group and in the final section we prove the main theorem and some generalizations.\\

Throughout the paper let $k$ be an algebraically closed field. All algebras and modules are assumed to be finite-dimensional unless they are obviously not. A tensor product $\otimes$ without supscript will always refer to the tensor product over the field $\otimes_k$.

\section{Extensions of Hopf algebras}

The statements in this section (with the exception of Proposition \ref{specialblock}) are mostly due to Farnsteiner in \cite[Section 2]{Fa06}, who proved them for cocommuative Hopf algebras. If one switches the orders of the tensor products at the appropriate places, his proofs apply almost verbatim. We include them for convenience of the reader. For general results on Hopf algebras, that we use without mentioning, we refer to \cite{Kas95} and \cite{Mon93}.\\

For a Hopf algebra $A$, we denote its counit and antipode by $\varepsilon_A$ and $S_A$, respectively. If the algebra is obvious from the context, we sometimes omit the subscript. If $A$, $B$ and $C$ are augmented algebras by a weak Hopf sequence $k\to B\to A\to C\to k$ we understand that $A$ is a Hopf algebra, $B$ is a normal augmented subalgebra of $A$, such that $A/A \ker \varepsilon_B\cong C$ as augmented algebra. Here an augmented subalgebra is called normal if $A\ker \varepsilon_B$ is also a right ideal. In this case we also write $A//B\cong C$. If a $C$-module $M$ is viewed as an $A$-module, we denote it by $M^{[1]}$. This notation is now (beside the notation $M^{[\ell]}$) standard for quantum groups and comes from the notation in the context of algebraic groups. Conversely, an $A$-module $M$ on which $B$ acts trivially is denoted by $M^{[-1]}$ if viewed as a $C$-module. Furthermore an $A$-module $M$ which is viewed as a $B$-module will be denoted $M|_B$.

\begin{lem}\label{simples}
Let $k\to B\to A\to C\to k$ be a weak Hopf sequence. If $L_1,\dots,L_n$ are simple $A$-modules, such that $\{L_1|_B,\dots,L_n|_B\}$ is a complete set of representatives for the isoclasses of simple $B$-modules, then every simple $A$-module $S$ is of the form $S\cong M^{[1]}\otimes L_i$ for a unique $i\in \{1,\dots,n\}$ and a (up to isomorphism) unique simple $C$-module $M$.
\end{lem}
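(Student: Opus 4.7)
The plan is to run a Clifford-type argument adapted to the Hopf setting. Given a simple $A$-module $S$, I first restrict it to $B$. Because $B$ is a normal augmented subalgebra, the $B$-socle $\operatorname{soc}_B(S)$ is an $A$-submodule (the adjoint action of $A$ on $B$-submodules preserves simplicity and hence the socle), and the simplicity of $S$ over $A$ forces $S|_B = \operatorname{soc}_B(S)$, so $S|_B$ is semisimple. The same adjoint action permutes the isoclasses of simple $B$-summands occurring in $S|_B$; but the hypothesis that every simple $B$-module extends to some $L_j$ means that this permutation acts trivially on isoclasses, so $S|_B$ is isotypic. Therefore there exist a unique $i$ and an integer $m\geq 1$ with $S|_B \cong (L_i|_B)^{\oplus m}$.

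Next I consider the space $H := \Hom_B(L_i, S)$, of dimension $m$. Using the comultiplication of $A$ together with the antipode (and the normality of $B$, which is essential for well-definedness) one equips $H$ with an $A$-module structure under which, again by normality, $B$ acts trivially. Hence $H$ descends to a $C$-module $M$, i.e. $H = M^{[1]}$. The canonical evaluation $\mathrm{ev}\colon M^{[1]} \otimes L_i \to S$, $f\otimes x \mapsto f(x)$, is then $A$-linear (using compatibility of the $A$-actions on both factors with $\Delta$), and it is bijective because after restriction to $B$ it is the standard isomorphism $\Hom_B(L_i,S)\otimes L_i \xrightarrow{\sim} S|_B$ valid for any isotypic semisimple $B$-module.

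Simplicity of $M$ as a $C$-module now follows by contradiction: any proper $C$-submodule $M'\subsetneq M$ would pull back to a proper $A$-submodule $(M')^{[1]}\otimes L_i$ of $S$, contradicting simplicity of $S$. For the uniqueness assertion, an isomorphism $S\cong N^{[1]}\otimes L_j$ restricts to $S|_B \cong (L_j|_B)^{\oplus \dim N}$; combined with the isotypicity of $S|_B$ and the hypothesis on the $L_i$, this forces $i=j$ and $\dim N = m$, after which applying $\Hom_B(L_i,-)$ recovers $N\cong M$ as $C$-modules. The main obstacle is the construction in the second paragraph: building the $A$-action on $\Hom_B(L_i,S)$ and verifying that $B$ acts trivially on it. Both rely on a careful use of the normality condition $A\ker\varepsilon_B = \ker\varepsilon_B\cdot A$ together with the interaction of the antipode with $B$; once these are in place, the remaining bookkeeping is essentially formal.
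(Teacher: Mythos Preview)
Your overall plan is sound and converges on the same key objects as the paper (the $C$-module $\Hom_B(L_i,S)$ and the evaluation map), but the Clifford-theoretic preamble in your first paragraph is where the argument wobbles. The justification ``the adjoint action of $A$ on $B$-submodules preserves simplicity and hence the socle'' does not parse in the Hopf setting: for a general element $a\in A$ there is no operation sending a $B$-submodule $W\subseteq S$ to another $B$-submodule $a\cdot W$, because $a$ is not group-like. What \emph{is} available from normality is that $(\ker\varepsilon_B)A=A(\ker\varepsilon_B)$, which yields that $B$-\emph{invariants} of an $A$-module form an $A$-submodule; but this says nothing about $\rad B$ and hence nothing about $\operatorname{soc}_B(S)$. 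Similarly, the claim that ``the adjoint action permutes the isoclasses of simple $B$-summands'' has no obvious meaning here, so your isotypicity step is not justified as written.

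The paper's proof sidesteps all of this. It writes down the evaluation map
\[
\varphi:\Hom_B(L_i,S)\otimes L_i\longrightarrow S,\qquad f\otimes x\mapsto f(x),
\]
observes (as you do) that it is $A$-linear, and then argues directly: $\varphi\neq 0$ since some $L_i|_B\hookrightarrow S|_B$, hence $\varphi$ is surjective by simplicity of $S$; on the other hand the image of $\varphi$ is exactly the $L_i$-isotypic component of $\operatorname{soc}_B(S)$, and Schur's Lemma gives $\dim\bigl(\Hom_B(L_i,S)\otimes L_i\bigr)=\dim\operatorname{im}\varphi$, so $\varphi$ is an isomorphism. Semisimplicity and isotypicity of $S|_B$ are then \emph{consequences} of this isomorphism, not prerequisites. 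In other words, the single fact you correctly flag as the ``main obstacle'' (the $A$-linearity of $\varphi$, equivalently the $C$-module structure on $\Hom_B(L_i,S)$) is all that is needed; once it is in hand, simplicity of $S$ plus a dimension count finish the job without any Clifford machinery. Your uniqueness paragraph is fine and matches the paper's.
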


\begin{proof}
Let $S$ be a simple $A$-module. Then by assumption there exists $i\in \{1,\dots,n\}$, such that $L_i|_B\hookrightarrow S|_B$. Hence the $A$-linear map $\varphi: \Hom_B(L_i,S)\otimes L_i\to S, f\otimes x\mapsto f(x)$ is non-zero and hence surjective. The image is the $L_i$-isotypical component of $S|_B$. By Schur's Lemma (cf. \cite[I.2.14 (3)]{Jan03}) the two modules also have the same dimensions and therefore, the map is an isomorphism. Thus $S\cong \Hom_B(L_i,S)\otimes L_i$ and $\Hom_B(L_i,S)$ is a simple $C$-module.\\
The unicity is also proven: If a simple module $S$ is isomorphic to $M^{[1]}\otimes N$ with $M$ a simple $C$-module and $N$ a simple $B$-module, then $N$ is uniquely determined by $S|_B\cong N^{\dim M}$ and $M\cong \Hom_B(N,S)$ is also uniquely determined.
\end{proof}

\begin{lem}\label{extsimpleextensionHopf}
Let $k\to B\to A\to C\to k$ be a weak Hopf sequence. Suppose $\Ext^1_B(S,S)=0$ for every simple $A$-module $S$. If $L_1,L_2$ are simple $A$-modules and $M_1,M_2$ are simple $C$-modules such that $L_i|_B$ is simple, then we have
\[\Ext^1_A(M_1^{[1]}\otimes L_1, M_2^{[1]}\otimes L_2)\cong \begin{cases}\Ext^1_C(M_1,M_2)&\text{if}\thickspace L_1\cong L_2\\\Hom_C(M_1,M_2\otimes \Ext^1_B(L_1,L_2)^{[-1]})&\text{if}\thickspace L_1\ncong L_2.\end{cases}\]
\end{lem}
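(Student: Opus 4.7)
The plan is to invoke the Lyndon--Hochschild--Serre spectral sequence attached to the weak Hopf sequence $k\to B\to A\to C\to k$. For any $A$-modules $V,W$, the normality of $B$ in $A$ makes $\Ext^q_B(V,W)$ into an $A$-module on which $B$ acts trivially (because for $f\in\Hom_B(V,W)$ and $b\in B$ one has $(b\cdot f)(v)=\sum b_{(1)}f(S(b_{(2)})v)=\sum b_{(1)}S(b_{(2)})f(v)=\varepsilon(b)f(v)$, and the argument extends to $\Ext^q_B$), so it descends to a $C$-module and one obtains
\[
E_2^{p,q}=\Ext^p_C\!\bigl(k,\Ext^q_B(V,W)^{[-1]}\bigr)\Longrightarrow \Ext^{p+q}_A(V,W).
\]
I apply this with $V=M_1^{[1]}\otimes L_1$ and $W=M_2^{[1]}\otimes L_2$ and read off $\Ext^1_A(V,W)$ from the five-term exact sequence $0\to E_2^{1,0}\to \Ext^1_A(V,W)\to E_2^{0,1}\to E_2^{2,0}$.

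The first concrete step is to compute the $C$-modules $\Hom_B(V,W)$ and $\Ext^1_B(V,W)$. Since $B$ acts trivially on $M_i^{[1]}$, the coproduct formula gives $b\cdot(m\otimes x)=\sum\varepsilon(b_{(1)})m\otimes b_{(2)}x=m\otimes bx$, so $V|_B\cong M_1\otimes L_1|_B$ and $W|_B\cong M_2\otimes L_2|_B$ with $B$ acting only on the second factor. This yields natural $C$-equivariant isomorphisms
\[
\Hom_B(V,W)\cong \Hom_k(M_1,M_2)\otimes \Hom_B(L_1,L_2)^{[-1]},
\]
\[
\Ext^1_B(V,W)\cong \Hom_k(M_1,M_2)\otimes \Ext^1_B(L_1,L_2)^{[-1]},
\]
with the diagonal $C$-action on the right-hand side. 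Applying the tensor--hom adjunction $\Hom_C(k,M_1^*\otimes N)\cong\Hom_C(M_1,N)$ and its derived analogue for $\Ext^1_C$ then gives
\[
E_2^{1,0}\cong \Ext^1_C\!\bigl(M_1,M_2\otimes \Hom_B(L_1,L_2)^{[-1]}\bigr),\qquad E_2^{0,1}\cong \Hom_C\!\bigl(M_1,M_2\otimes \Ext^1_B(L_1,L_2)^{[-1]}\bigr).
\]

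Now I split on whether $L_1\cong L_2$, which by the uniqueness part of Lemma \ref{simples} (reading $L_1,L_2$ as members of the parameterizing set) is equivalent to $L_1|_B\cong L_2|_B$. If $L_1\cong L_2$, then Schur's lemma for simple $B$-modules gives $\Hom_B(L_1,L_2)\cong k$, and the standing hypothesis $\Ext^1_B(S,S)=0$ for simple $A$-modules $S$ forces $\Ext^1_B(L_1,L_2)=0$; thus $E_2^{0,1}=0$, and the five-term sequence yields $\Ext^1_A(V,W)\cong E_2^{1,0}\cong \Ext^1_C(M_1,M_2)$. If $L_1\not\cong L_2$, then $\Hom_B(L_1,L_2)=0$, so $E_2^{p,0}=0$ for every $p$; the five-term sequence collapses to an isomorphism $\Ext^1_A(V,W)\cong E_2^{0,1}\cong \Hom_C(M_1,M_2\otimes \Ext^1_B(L_1,L_2)^{[-1]})$, as required.

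The main obstacle is of a bookkeeping nature: one has to verify that the identifications of $\Hom_B(V,W)$ and $\Ext^1_B(V,W)$ above are genuinely $C$-equivariant, i.e.\ that the adjoint $A$-action induced by normality of $B$ matches the diagonal $C$-action on the two tensor factors, and one has to set up the Lyndon--Hochschild--Serre spectral sequence in the present weak-Hopf context. Both points are carried out in \cite[Section 2]{Fa06} in the cocommutative case; as the preamble to this section observes, the same arguments apply here once the tensor factors are written in the correct order.
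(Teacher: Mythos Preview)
Your proposal is correct and follows essentially the same route as the paper: both arguments run the Lyndon--Hochschild--Serre/Grothendieck spectral sequence for the normal inclusion $B\subset A$, read off the five-term exact sequence, identify the low-degree terms via $\Ext^q_B(M_1^{[1]}\otimes L_1,M_2^{[1]}\otimes L_2)\cong M_1^*\otimes M_2\otimes\Ext^q_B(L_1,L_2)$, and then split on $L_1\cong L_2$ using Schur and the hypothesis $\Ext^1_B(S,S)=0$. The only cosmetic difference is that the paper first extracts $M_1$ by setting up the spectral sequence as $\Ext^n_C(M_1,\Ext^m_B(L_1,-))\Rightarrow\Ext^{n+m}_A(M_1^{[1]}\otimes L_1,-)$ and then pulls $M_2$ out of the inner $\Ext$, whereas you invoke the LHS sequence with $k$ in the first slot and separate both $M_i$ afterwards; the resulting five-term sequences coincide. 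Your caveat that $L_1\not\cong L_2$ must be read as $L_1|_B\not\cong L_2|_B$ is well taken---the paper's ``by Schur's Lemma'' at that step makes the same tacit assumption, which is harmless in all applications since the $L_i$ are always chosen from a set of representatives as in Lemma~\ref{simples}.
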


\begin{proof}
Let $M$ be a $C$-module and let $E$ and $V$ be $A$-modules. Then we have isomorphisms of $A$-modules $\Hom_k(M^{[1]}\otimes E,V)\cong V\otimes (M^{[1]}\otimes E)^*\cong V\otimes (E^*\otimes (M^{[1]})^*)\cong (V\otimes E^*)\otimes (M^{[1]})^*\cong \Hom_k(M^{[1]},V\otimes E^*)$.\\
Taking $A$-invariants one gets: $\Hom_A(M^{[1]}\otimes E,V)\cong (\Hom_k(M^{[1]}\otimes E,V))^A\cong ((\Hom_k(M^{[1]},V\otimes E^*))^B)^C\cong (\Hom_k(M^{[1]},(V\otimes E^*)^B))^C\cong \Hom_C(M,\Hom_B(E,V)^{[-1]})$.\\
This shows that $\Hom_B(E,-)$ is right adjoint to $-\otimes E$ as a functor $\modu A\to \modu C$. Therefore \cite[Proposition 2.3.10]{Wei94} tells us that it sends injective modules to injectives, as $-\otimes E$ is exact.\\
Now $\Hom_C(M,-)$ is left exact and $\Hom_B(E,-)$ maps injectives to injectives, in particular to acyclic ones for $\Hom_C(M,-)$. Therefore by \cite[4.1 (1)]{Jan03} there is a Grothendieck spectral sequence $\Ext_C^n(M,\Ext^m_B(E,V)^{[-1]})\Rightarrow \Ext_A^{n+m}(M^{[1]}\otimes E,V)$. This gives rise to a 5-term exact sequence (see e.g. \cite[4.1 (4)]{Jan03}), for which we only display the first four terms:
\begin{align*}
0&\to \Ext^1_C(M,\Hom_B(E,V)^{[-1]})\to \Ext^1_A(M^{[1]}\otimes E,V)\\
&\to \Hom_C(M,\Ext^1_B(E,V)^{[-1]})\to \Ext^2_C(M,\Hom_B(E,V)^{[-1]}).
\end{align*}
Now we set $M:=M_1, E:=L_1$ and $V:=M_2^{[1]}\otimes L_2$. Furthermore we recognize that there are natural isomorphisms $\Hom_k(L_1,M_2^{[1]}\otimes L_2)\cong (M_2^{[1]}\otimes L_2)\otimes L_1^*\cong M_2^{[1]}\otimes (L_2\otimes L_1^*)\cong M_2^{[1]}\otimes \Hom_k(L_1, L_2)$. Again we take $B$-invariants to arrive at $\Hom_B(L_1,M_2\otimes L_2)\cong M_2\otimes \Hom_B(L_1,L_2)$. Now we have that $M_2^{[1]}\otimes -$ is exact, so by \cite[4.1 (2)]{Jan03} this implies $\Ext^n_B(L_1,M_2^{[1]}\otimes L_2)\cong M_2^{[1]}\otimes \Ext^1_B(L_1,L_2)$. Thus the 5-term sequence reads as:
\begin{align*}
0&\to \Ext^1_C(M_1,M_2\otimes \Hom_B(L_1,L_2)^{[-1]})\to \Ext^1_A(M_1^{[1]}\otimes L_1,M_2^{[1]}\otimes L_2)\\
&\to \Hom_C(M_1,M_2\otimes \Ext^1_B(L_1,L_2)^{[-1]})\to \Ext^2_C(M_1,M_2\otimes \Hom_B(L_1,L_2)^{[-1]}).
\end{align*}
Therefore if $L_1\ncong L_2$, then $\Ext^1_A(M_1^{[1]}\otimes L_1,M_2^{[1]}\otimes L_2)\cong \Hom_C(M_1,M_2\otimes \Ext^1_B(L_1,L_2)^{[-1]})$ as $M_2\otimes \Hom_B(L_1,L_2)=0$ by Schur's Lemma.\\
If on the other hand $L_1\cong L_2$, then by Schur's Lemma $\Hom_B(L_i|_B,L_i|_B)$ and $\Hom_A(L_i,L_i)$ are both $1$-dimensional. Thus they are eqal and hence $C$ acts trivially on this space. Therefore $\Ext^1_A(M_1^{[1]}\otimes L_1,M_2^{[1]}\otimes L_2)\cong \Ext^1_C(M_1,M_2\otimes \Hom_B(L_1,L_2)^{[-1]})\cong \Ext^1_C(M_1,M_2)$ as additionally by assumption $M_2\otimes \Ext^1_B(L_1,L_1)=0$.
\end{proof}

The following proposition is an analogue of \cite[Proposition 3.11]{AM11}, where it is stated for quantum category $\mathcal{O}$ in characteristic zero. We borrow their notation and speak of  the special block.

\begin{prop}\label{specialblock}
Let $k\to B\to A\to C\to k$ be a weak Hopf sequence. Let $L_1,\dots,L_n$ be simple $A$-modules, such that $\{L_1|_B,\dots,L_n|_B\}$ is a complete set of representatives for the simple $B$-modules. Furthermore let $L_n|_B$ be projective. Then the simple modules of the form $M^{[1]}\otimes L_n$ form a block ideal $\mathcal{B}^{spec}$ in $A$ (i.e. a direct sum of blocks), that is equivalent to $\modu C$.
\end{prop}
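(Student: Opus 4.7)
My plan splits into two parts: (i) show that the simples of the form $M^{[1]}\otimes L_n$ together span a union of blocks (which will be $\mathcal{B}^{\mathrm{spec}}$), and (ii) exhibit an equivalence $\modu C\to\modu\mathcal{B}^{\mathrm{spec}}$ via $M\mapsto M^{[1]}\otimes L_n$.

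For step (i), the crucial input is that $L_n|_B$ is simple projective; since $B$ is selfinjective (being Hopf-type), $L_n|_B$ is also simple injective, so $\Ext^1_B(L_n,X)=0=\Ext^1_B(X,L_n)$ for every $B$-module $X$. Inspecting the proof of Lemma~\ref{extsimpleextensionHopf}, the running assumption $\Ext^1_B(S,S)=0$ for every simple $A$-module $S$ is invoked only via the single term $\Ext^1_B(L_1,L_1)$ when $L_1\cong L_2$, while the $L_1\ncong L_2$ branch needs no such assumption. Applying (the proof of) that lemma with one of the two simples being $L_n$ therefore produces
\[\Ext^1_A(M^{[1]}\otimes L_n,M'^{[1]}\otimes L_i)=0=\Ext^1_A(M'^{[1]}\otimes L_i,M^{[1]}\otimes L_n)\]
whenever $i\neq n$. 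Since blocks are the connected components of the Ext-quiver on the isoclasses of simples, the simples $M^{[1]}\otimes L_n$ form a union of blocks, giving the block ideal $\mathcal{B}^{\mathrm{spec}}$.

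For step (ii), I would take $F(M):=M^{[1]}\otimes L_n$, which is exact since both $-\otimes L_n$ and the inflation $(-)^{[1]}$ are. The adjunction derived in the proof of Lemma~\ref{extsimpleextensionHopf}, combined with the Schur identification $\Hom_B(L_n,L_n)^{[-1]}\cong k$ (trivial $C$-module), gives $\Hom_A(F(M),F(M'))\cong \Hom_C(M,M')$, so $F$ is fully faithful. Because $L_n|_B$ is projective, the Grothendieck spectral sequence from that same proof collapses to natural isomorphisms $\Ext^n_A(F(M),F(M'))\cong \Ext^n_C(M,M')$ for every $n\geq 0$, so $F$ preserves all extensions.

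For essential surjectivity onto $\mathcal{B}^{\mathrm{spec}}$, note first that $F(M)$ has all composition factors of the form $M'^{[1]}\otimes L_n$, hence lies in $\modu\mathcal{B}^{\mathrm{spec}}$. Conversely, any $X\in\modu\mathcal{B}^{\mathrm{spec}}$ is handled by induction on composition length: given $0\to X'\to X\to X''\to 0$ with $X'=F(Y')$ and $X''=F(Y'')$, the extension class in $\Ext^1_A(F(Y''),F(Y'))$ corresponds, via the $\Ext^1$-isomorphism above, to a sequence $0\to Y'\to Y\to Y''\to 0$ in $\modu C$ with $F(Y)\cong X$. The main subtlety I anticipate is the bookkeeping in step (i), namely confirming that Lemma~\ref{extsimpleextensionHopf}'s global hypothesis reduces for our purposes to $\Ext^1_B(L_n,L_n)=0$, which is supplied by projectivity of $L_n|_B$; beyond that, the essential surjectivity argument is a routine dimension/induction step that I expect to go through cleanly.
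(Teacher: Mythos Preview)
Your argument follows essentially the same route as the paper: the functor $G(M)=M^{[1]}\otimes L_n$ with right adjoint $F(V)=\Hom_B(L_n,V)^{[-1]}$, the spectral sequence from Lemma~\ref{extsimpleextensionHopf}, and an induction on composition length. The paper packages the equivalence as $F\circ G\cong\Id$ directly and $G\circ F\cong\Id$ via the 5-Lemma, while you phrase it as fully faithful plus essentially surjective; these are the same argument in different clothing.

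There is one soft spot in your step~(i): the claim that $B$ is selfinjective ``being Hopf-type''. In the paper's definition of a weak Hopf sequence only $A$ is assumed to be a Hopf algebra; $B$ is merely a normal augmented subalgebra and need not be Frobenius, so you cannot conclude that $L_n|_B$ is injective on these grounds. Consequently $\Ext^1_B(L_i,L_n)=0$, and hence the vanishing $\Ext^1_A(M'^{[1]}\otimes L_i,\,M^{[1]}\otimes L_n)=0$, is not yet justified. The paper avoids this by establishing the equivalence first and recording only the $\Ext^1$-direction that comes from projectivity of $L_n|_B$. The block-ideal claim is then a consequence of the equivalence: since $G$ is left adjoint to the exact functor $F$, it sends projectives to projectives, so $P_A(M^{[1]}\otimes L_n)=G(P_C(M))$ lies in $\mathcal{B}^{\mathrm{spec}}$; and since $A$ itself \emph{is} a Hopf algebra and hence selfinjective, these projectives are also the injective indecomposables in $\modu A$ with socle in $\mathcal{B}^{\mathrm{spec}}$, forcing $\mathcal{B}^{\mathrm{spec}}$ to be closed under both projective covers and injective hulls, hence a sum of blocks. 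You can repair your argument the same way: run step~(ii) first and then invoke selfinjectivity of $A$ rather than of $B$.
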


\begin{proof}
We define two functors $F:\modu A\to \modu C, V\mapsto \Hom_B(L_n,V)^{[-1]}$ and $G: \modu C\to \modu A, M\mapsto M^{[1]}\otimes L_n$. Both functors are exact since $L_n|_B$ is projective and $k$-flat. Furthermore they are adjoint since $\Hom_A(M^{[1]}\otimes E,V)\cong \Hom_C(M,\Hom_B(E,V)^{[-1]})$, compare the proof of the foregoing lemma.\\
We have $\Hom_B(L_n, M^{[1]}\otimes L_n)\cong M^{[1]}\otimes \End_B(L_n)\cong M^{[1]}$, i.e. $F\circ G\cong \Id$. Also $G\circ F\cong \Id$, since this holds for the simple modules in the special block by the proof of Lemma \ref{simples}. Now apply the adjunction to get a map in $\Hom_A(G\circ F M,M)$. Induction using the $5$-Lemma tells us that this map is an isomorphism. Thus we have an equivalence of categories.\\
To prove that it is a block ideal apply the foregoing lemma to get:
\[\Ext^1_A(M^{[1]},L_n, N^{[1]}\otimes L_r)\cong \Hom_C(M,N\otimes \Ext^1_B(L_n,L_r)^{[-1]})=0\]
for $r\neq n$.
\end{proof}

In a similar fashion one can also generalize \cite[Proposition 2.3]{Fa06}:

\begin{prop}\label{extprojectiveextensionHopf}
Let $k\to B\to A\to C\to k$ be a weak Hopf sequence. Then for $A$-modules $X,Y$ and a projective $C$-module $Q$, one has isomorphisms $\Ext^j_A(Q^{[1]}\otimes X, N^{[1]}\otimes Y)\cong \Hom_C(Q^{[1]},N\otimes \Ext^j_B(X,Y)^{[-1]})$ for any $C$-module $N$ and all $j\in \mathbb{N}$.\\
Suppose that every simple $B$-module is the restriction of an $A$-module. If $P_1,\dots,P_n$ are $A$-modules, such that $\{P_1|_B,\dots,P_n|_B\}$ is a complete set of representatives of projective indecomposable $B$-modules and $\{Q_1,\dots, Q_m\}$ is a complete set of representatives of the isoclasses of the projective indecomposable $C$-modules, then the $A$-modules $Q_r^{[1]}\otimes P_i$ form a complete system of projective indecomposable $A$-modules.
\end{prop}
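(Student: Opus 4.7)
My plan is to split the proposition into the two separate assertions and reduce each to the Grothendieck spectral sequence
$$\Ext^n_C(M, \Ext^m_B(E, V)^{[-1]}) \Rightarrow \Ext^{n+m}_A(M^{[1]}\otimes E, V)$$
set up in the proof of Lemma \ref{extsimpleextensionHopf}. The trick each time is to choose one of the two variables $M$ or $E$ so that the spectral sequence collapses onto a single row or column, yielding an edge isomorphism.

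For the $\Ext$ formula, I would set $M:=Q$ and use that $Q$ is $C$-projective to force $\Ext^n_C(Q,-)=0$ for $n\geq 1$; the spectral sequence then degenerates to an isomorphism $\Ext^j_A(Q^{[1]}\otimes X, V)\cong \Hom_C(Q,\Ext^j_B(X,V)^{[-1]})$ valid for every $A$-module $V$ and all $j\geq 0$. It then remains to specialise $V=N^{[1]}\otimes Y$ and identify $\Ext^j_B(X,N^{[1]}\otimes Y)\cong N^{[1]}\otimes \Ext^j_B(X,Y)$; this is exactly the exactness argument already used in the proof of Lemma \ref{extsimpleextensionHopf} (since $N^{[1]}$ is $B$-trivial, the functor $N^{[1]}\otimes -$ is exact on $\modu B$ and so commutes with the derived functors of $\Hom_B(X,-)$). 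Passing to the $[-1]$ twist produces the claimed formula.

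For the second claim I would first verify projectivity of $Q_r^{[1]}\otimes P_i$ by applying the spectral sequence in the opposite direction: setting $E:=P_i$ makes $\Ext^m_B(P_i,-)$ vanish for $m\geq 1$, collapsing onto the column $m=0$ and giving $\Ext^j_A(Q_r^{[1]}\otimes P_i,V)\cong \Ext^j_C(Q_r,\Hom_B(P_i,V)^{[-1]})$, which vanishes in positive degrees by $C$-projectivity of $Q_r$. For indecomposability I would use the $j=0$ case with $V=Q_r^{[1]}\otimes P_i$: a short computation exploiting that $Q_r^{[1]}$ is $B$-trivial identifies the endomorphism ring with $\End_C(Q_r)\otimes_k\End_B(P_i)$, and since $k$ is algebraically closed the tensor product of two finite-dimensional local $k$-algebras is local, so $Q_r^{[1]}\otimes P_i$ is indecomposable.

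For completeness, the canonical surjections $Q_r\twoheadrightarrow \mathrm{top}\,Q_r$ and $P_i\twoheadrightarrow \mathrm{top}\,P_i$, combined with the indecomposability established above, show that $Q_r^{[1]}\otimes P_i$ is the projective cover of the simple $A$-module $(\mathrm{top}\,Q_r)^{[1]}\otimes \mathrm{top}\,P_i$. Under the standing hypothesis that every simple $B$-module lifts to $A$, Lemma \ref{simples} enumerates every simple $A$-module in precisely this form, so different pairs $(r,i)$ produce non-isomorphic projective covers and the list is exhaustive. The main obstacle I anticipate is not conceptual but bookkeeping: one has to make sure that intermediate objects like $\Hom_B(P_i,Q_r^{[1]}\otimes P_i)$ truly inherit a $C$-module structure from $A$ that turns the identification with $Q_r\otimes\End_B(P_i)^{[-1]}$ into a \emph{ring} isomorphism, and that the conjugation action using the antipode $S_A$ is handled correctly throughout.
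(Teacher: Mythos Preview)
The paper does not actually give a proof of this proposition; it merely says ``In a similar fashion one can also generalize \cite[Proposition 2.3]{Fa06}'', pointing back to the Grothendieck spectral sequence set up in the proof of Lemma~\ref{extsimpleextensionHopf}. Your plan is exactly that similar fashion: collapse the spectral sequence once using $C$-projectivity of $Q$ (for the $\Ext$ formula) and once using $B$-projectivity of $P_i$ (for projectivity of $Q_r^{[1]}\otimes P_i$). Both of these steps are correct, as is the subsequent identification $\Ext^j_B(X,N^{[1]}\otimes Y)\cong N^{[1]}\otimes\Ext^j_B(X,Y)$ via the exactness argument already used in that lemma.

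The one place to be careful is your indecomposability step. The adjunction gives
\[
\End_A(Q_r^{[1]}\otimes P_i)\;\cong\;\Hom_C\bigl(Q_r,\,Q_r\otimes\End_B(P_i)^{[-1]}\bigr),
\]
but the $C$-action on $\End_B(P_i)^{[-1]}$ need not be trivial, and even when it is, the ring structure transported across an adjunction is not visibly the tensor product of the two endomorphism rings (you flag this yourself). A cleaner and shorter route avoids endomorphism rings entirely: for any simple $A$-module $S=M^{[1]}\otimes L_j$ as in Lemma~\ref{simples}, the same adjunction yields
\[
\Hom_A\bigl(Q_r^{[1]}\otimes P_i,\,M^{[1]}\otimes L_j\bigr)\;\cong\;\Hom_C\bigl(Q_r,\,M\otimes\Hom_B(P_i,L_j)^{[-1]}\bigr),
\]
which is zero unless $L_j|_B\cong\Kopf(P_i|_B)$, and in that case is at most one-dimensional and nonzero for exactly one $M$ (determined by $r$ and the one-dimensional $C$-module $\Hom_B(P_i,L_j)^{[-1]}$). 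Hence $Q_r^{[1]}\otimes P_i$ has simple top, so is indecomposable; the same computation shows that distinct pairs $(r,i)$ have distinct tops, giving $nm$ pairwise non-isomorphic projective indecomposables, which matches the count of simples from Lemma~\ref{simples}. This replaces both your indecomposability and completeness paragraphs without needing to track ring structures through the adjunction.
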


\section{Infinitely many Auslander-Reiten components}

In this section we prove that there are infinitely many Auslander-Reiten components for certain algebras where support varieties can be defined. The approach is modelled on \cite{Fa98}. In the proof we use slightly different combinatorics. Recall the following definitions:

\begin{defn}[cf. \cite{FW09}, \cite{EHTSS04}]
\begin{enumerate}[(i)]
\item A Hopf algebra $A$ is said to satisfy (fg) if the following two conditions are satisfied:
\begin{enumerate}[({fg}1)]
\item $H^{ev}(A,k)$ is finitely generated.
\item $\Ext^\bullet_A(M,N)$ is finitely generated as a module for $H^{ev}(A,k)$.
\end{enumerate}
\item A selfinjective algebra $A$ is said to satisfy (Fg) if the following two conditions are satisfied:
\begin{enumerate}[({Fg}1)]
\item There exists a graded subalgebra $H$ of the Hochschild cohomology ring $HH^\bullet(A)$ such that $H$ is a commutative Noetherian ring and $H^0=HH^0(A)=Z(A)$.
\item $\Ext^\bullet_A(A/\rad{A}, A/\rad{A})$ is finitely generated as an $H$-module.
\end{enumerate}
\end{enumerate}
\end{defn}

By $\gamma$ we denote the rate of growth of a graded vector space. The complexity $\cx_A$ of a module is defined to be the rate of growth of a minimal projective resolution. To the author's knowledge in the following, (ii) is not stated explicitely anywhere for (fg)-Hopf algebras, but all the statements and proofs for group algebras transfer verbatim. The theorem is proven by associating a certain variety to a module, whose dimension equals the complexity of the module, the so called support variety.

\begin{thm}[(i): {\cite[p. 16]{Bro98}}, {\cite[Theorem 2.5]{EHTSS04}}, (ii): {\cite[Proposition 5.2, Theorem 5.3]{EHTSS04}}, cf. {\cite[Corollary 5.10.3]{B91}}]\label{cxdimsupporthochschild}
Let $A$ be an (fg)-Hopf algebra or a selfinjective algebra satisfying (Fg).
\begin{enumerate}[(i)]
\item We have $\cx_A(M)=\gamma(\Ext^\bullet(M,M))<\infty $ for any module $M$ in $\modu A$.
\item An indecomposable module $M$ is periodic iff $\cx_A M=1$.
\end{enumerate}
\end{thm}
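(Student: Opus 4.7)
The plan is to reduce both statements to the standard support-variety machinery that is made available by the finiteness hypotheses. In the Hopf-algebra case set $H:=H^{ev}(A,k)$; in the selfinjective case let $H$ be the graded subalgebra of $HH^\bullet(A)$ provided by (Fg1). In either setting $H$ is a commutative Noetherian finitely generated graded $k$-algebra with finite-dimensional zero-component, and for every pair of finite-dimensional $A$-modules $M,N$ the graded vector space $\Ext^\bullet_A(M,N)$ is a finitely generated $H$-module. The commutative-algebra input I will invoke throughout is Hilbert--Serre: for such an $H$ and any finitely generated graded $H$-module $X$, the rate of growth $\gamma(X)$ is finite and coincides with the Krull dimension of $H/\mathrm{Ann}_H(X)$.

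For (i), I would fix a minimal projective resolution $P_\bullet\twoheadrightarrow M$ and use the standard decomposition $P_n\cong\bigoplus_L P(L)^{a_L(n)}$ indexed by the simple $A$-modules $L$, with multiplicities $a_L(n)=\dim_k\Ext^n_A(M,L)$. Summing dimensions gives $\dim_k P_n=\sum_L a_L(n)\dim_k P(L)$, so $\cx_A(M)=\max_L\gamma(\Ext^\bullet_A(M,L))$, and each of these $\gamma$'s is finite by Hilbert--Serre applied to the $H$-module $\Ext^\bullet_A(M,L)$. Combined with the trivial estimate $\dim_k\Ext^n_A(M,M)\le\dim_k\Hom_A(P_n,M)\le(\dim_k M)(\dim_k P_n)$ this already yields $\gamma(\Ext^\bullet_A(M,M))\le\cx_A(M)<\infty$. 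For the reverse inequality I would observe that Yoneda composition makes $\Ext^\bullet_A(M,L)$ into a graded right $\Ext^\bullet_A(M,M)$-module and that the $H$-action factors through $\Ext^\bullet_A(M,M)$, so finite generation over $H$ forces finite generation over $\Ext^\bullet_A(M,M)$; a second application of Hilbert--Serre gives $\gamma(\Ext^\bullet_A(M,L))\le\gamma(\Ext^\bullet_A(M,M))$, and taking the maximum over $L$ closes the circle.

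For (ii), one direction is immediate: if $\Omega^d_A M\cong M$ then the minimal projective resolution is eventually periodic of period $d$, so $\dim_k P_n$ is bounded and $\cx_A(M)\in\{0,1\}$; the value $0$ is excluded because a periodic module is by convention nonprojective. For the converse, suppose $\cx_A(M)=1$. By (i) the support variety $V_A(M):=\maxSpec(H/\mathrm{Ann}_H\Ext^\bullet_A(M,M))$ has Krull dimension $1$, so graded prime avoidance produces a homogeneous element $\zeta\in H$ of positive even degree $d$ lying outside every minimal prime of $\mathrm{Ann}_H\Ext^\bullet_A(M,M)$; such a $\zeta$ acts as a non-zero-divisor on $\Ext^\bullet_A(M,M)$ in all sufficiently high degrees. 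Viewed as a self-extension class of $M$, $\zeta$ yields a Carlson-type short exact sequence $0\to\Omega^d_A M\to L_\zeta\to M\to 0$ whose support variety equals $V(\zeta)\cap V_A(M)$ and is therefore zero-dimensional; by (i), $L_\zeta$ then has complexity $0$ and is projective. Passing to the stable category collapses the sequence to an isomorphism $\Omega^d_A M\cong M$, so $M$ is periodic.

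The principal technical obstacle is the Carlson-style argument in the last paragraph. Producing a Carlson-type module $L_\zeta$ from a cohomology class, identifying its support variety with $V(\zeta)\cap V_A(M)$, and deducing projectivity from the vanishing of the support variety are standard in the group cohomology setting but depend on the full support-variety dictionary being in place. The substantive content of the proof is therefore checking that this dictionary is available both for (fg) Hopf algebras and for (Fg) selfinjective algebras, which is precisely what the cited references \cite{EHTSS04} and \cite{Bro98} provide; the outline above is otherwise formal manipulation of rates of growth.
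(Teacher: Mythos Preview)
The paper does not supply its own proof of this theorem; it merely records the statement and attributes it to \cite{Bro98}, \cite{EHTSS04}, and \cite{B91}, the sentence preceding the theorem (``The theorem is proven by associating a certain variety to a module, whose dimension equals the complexity of the module, the so called support variety.'') being the only commentary. Your outline is precisely the standard support-variety argument that those references carry out, so there is nothing to compare against and your proposal is consistent with what the cited sources do.

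One small remark on part~(ii): the exact sequence you write as $0\to\Omega^d_A M\to L_\zeta\to M\to 0$ is not literally the cone of the map $\hat\zeta\colon\Omega^d_A M\to M$ determined by $\zeta$; the triangle $\Omega^d_A M\to M\to C_\zeta\to\Omega^{d-1}_A M$ in the stable category rather unfolds to a short exact sequence of the shape $0\to M\to C_\zeta\oplus(\text{projective})\to\Omega^{d-1}_A M\to 0$. This does not affect the argument---the support variety of the cone is still $V(\zeta)\cap V_A(M)$ and the conclusion $\Omega^d_A M\cong M$ in the stable category follows in the same way---but if you want the sequence exactly as you wrote it you should instead invoke the element of $\Ext^1_A(M,\Omega^d_A M)$ corresponding to $\zeta$ under the identification $\Ext^d_A(M,M)\cong\Ext^1_A(M,\Omega^{d-1}_A M)$ (with a shift by one). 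You already flag that these constructions are the content of the cited references, so this is a cosmetic point rather than a gap.
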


Let us now recall some notions of Auslander-Reiten theory. The Auslander-Reiten quiver of an algebra $A$ is the oriented graph which has as vertices the isomorphism classes of indecomposable modules and as arrows the irreducible maps, i.e. the maps $f$ which are neither split monomorphisms nor split epimorphisms and whenever $f=f_2f_1$, then $f_1$ is a split monomorphism or $f_2$ is a split epimorphism. The stable Auslander-Reiten quiver $\Gamma_s(A)$ is obtained by omitting the vertices corresponding to projective modules and all adjacent arrows. For a (connected) component $\Theta$ of the stable Auslander-Reiten quiver, let $\Theta(d):=\{[M]\in \Theta|\dim M\leq d\}$. For a selfinjective algebra, the Auslander-Reiten translation is denoted by $\tau_A$, the szyzygy functor by $\Omega_A$, and the Nakayama functor by $\mathcal{N}_A$ (see e.g. \cite[Remark after Proposition 4.12.9]{B95}).

\begin{lem}[cf. {\cite[Lemma 1.1]{Fa98}}, {\cite[Lemma 3.1]{Fa00}}]
Let $A$ be an (fg)-Hopf algebra or an (Fg)-selfinjective algebra and let $\Theta\subseteq \Gamma_s(A)$ be a component. If $\mathcal{T}\subseteq \Theta$ is a $\tau$-orbit, then $\mathcal{T}\cap \Theta(d)$ is finite for every $d>0$.
\end{lem}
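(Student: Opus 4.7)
I would split the argument according to whether $M$ is $\tau_A$-periodic, using complexity as the deciding invariant. The key observation is that $\tau_A \cong \mathcal{N}_A \circ \Omega_A^2$, where $\Omega_A$ preserves complexity (it simply shifts a minimal projective resolution) and $\mathcal{N}_A$ is an autoequivalence of $\modu A$ which only permutes the finitely many isoclasses of simples and hence distorts total dimension by a uniformly bounded factor. Consequently $\cx_A(\tau_A^n M) = \cx_A(M) =: c$ for every $n\in \mathbb{Z}$, and $M$ is $\tau_A$-periodic if and only if it is $\Omega_A$-periodic. By Theorem~\ref{cxdimsupporthochschild}(ii) this happens precisely when $c=1$; in that case $\mathcal{T}$ is already a finite set and the claim is immediate.

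For $c\geq 2$ it suffices to show that $\dim \tau_A^n M\to\infty$ as $|n|\to\infty$. I would argue this via the chain
\[
\dim \Omega_A^n M \;\geq\; \dim \Kopf(\Omega_A^n M) \;\geq\; \dim \Ext_A^n\bigl(M,\,A/\rad A\bigr),
\]
where the second inequality uses the dimension-shift $\Ext_A^n(M,S)\cong \underline{\Hom}_A(\Omega_A^n M, S)$ and the identity $\dim\Hom_A(X,S) = [\Kopf(X):S]$ for simple $S$, weighted by $\dim S$ and summed over all simples. By (fg) or (Fg) the graded $\Ext$-group $\Ext_A^\bullet(M,A/\rad A)$ is finitely generated over the Noetherian graded cohomology ring of $A$, and its Hilbert function is eventually a quasi-polynomial of degree $c-1\geq 1$, so the right-hand side tends to infinity; hence $\dim \Omega_A^n M\to\infty$. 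Combining with $\tau_A^n M \cong \mathcal{N}_A^n\Omega_A^{2n}M$ and the uniform bound $\dim \mathcal{N}_A^n X \geq (\min_S\dim S/\max_S\dim S)\cdot \dim X$, valid because $\mathcal{N}_A$ permutes composition factors via the Nakayama permutation $\pi$, yields $\dim \tau_A^n M\to\infty$ as $n\to\infty$. For $n\to -\infty$ one argues symmetrically, applying the same estimates to the cosyzygy $\Omega_A^{-1}$ and to $\Ext$ with $M$ in the second argument.

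The main obstacle I foresee is ensuring that the quasi-polynomial lower bound on $\dim \Ext_A^n(M,A/\rad A)$ genuinely forces $\dim \Omega_A^n M\to \infty$ for every $n$, rather than only along residue classes of the period of that quasi-polynomial. If needed, this gap is patched either by exploiting the short exact sequences $0\to \Omega_A^{n+1}M \to P_n\to \Omega_A^n M\to 0$, which combined with $\dim P_n\to\infty$ force consecutive syzygy dimensions to diverge together, or by replacing $A/\rad A$ with a test module whose support variety meets every irreducible component of the support variety $V(M)$, so that the support-variety identification $\cx_A(M) = \dim V(M)$ provided by (fg)/(Fg) forces non-degeneracy of the quasi-polynomial in every residue class.
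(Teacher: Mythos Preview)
Your approach is correct and rests on the same skeleton as the paper's: the identity $\tau_A\cong\Omega_A^2\circ\mathcal{N}_A$, the uniformly bounded effect of $\mathcal{N}_A$ on dimensions (since it permutes the simples), and the dichotomy $\Omega$-periodic versus $\cx_A M\geq 2$ coming from Theorem~\ref{cxdimsupporthochschild}. The paper's packaging is leaner in two respects. First, it argues by contradiction rather than proving $\dim\tau_A^n M\to\infty$: from $\dim\Omega_A^{2j}M\leq d'$ for $j$ in an infinite set $J\ni 0$ one immediately bounds
\[
\dim\Ext^{2|j|}_A(M,M)\;\leq\;\dim\Hom_A(\Omega^{2|j|}_A M,M)\;\leq\; d'^{\,2}
\]
(using $\Hom_A(M,\Omega^{2j}_A M)$ for $j<0$), and contrasts this with $\gamma(\Ext^\bullet_A(M,M))=\cx_A M\geq 2$ from Theorem~\ref{cxdimsupporthochschild}(i). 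Second, the paper works with $\Ext^\bullet_A(M,M)$ rather than $\Ext^\bullet_A(M,A/\rad A)$, which lets it quote that theorem directly and bypass your chain through $\Kopf(\Omega^n_A M)$. Your quasi-polynomial worry is legitimate and is equally present in the paper's assertion that $\dim\Ext^{2n}_A(M,M)\geq Cn$ for \emph{all} $n$; one small advantage of the self-$\Ext$ algebra is that its ring structure provides an embedding of a graded quotient of the cohomology ring, from which such lower bounds on the Hilbert function are more directly accessible than from the bare module $\Ext^\bullet_A(M,A/\rad A)$.
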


\begin{proof}
Assume there is $d>0$, such that $\mathcal{T}\cap \Theta(d)$ is infinite. Then $\Theta$ is obviously a non-periodic component. By assumption there is a vertex $[M]\in \Theta(d)$ such that $\dim \tau_A^j(M)\leq d$ for $j$ belonging to some infinite set $J\subset \mathbb{Z}$. As $\tau_A=\Omega^2_A\circ \mathcal{N}_A$ we also have $\dim \Omega^{2j}_A(M)\leq d'$ for $j$ belonging to $J$, since $\mathcal{N}_A$ is an exact functor permuting the isoclasses of the simples. Without loss of generality assume that $d'=d$ and that $0\in J$. Consequently
\[\dim \Ext^{2j}_A(M,M)\leq \dim \Hom_A(\Omega^{2j}_A(M),M)\leq d^2\thickspace \text{for all}\thickspace j\in J\cap \mathbb{N}_0\]
while
\[\dim \Ext^{-2j}_A(M,M)\leq \dim \Hom_A(M,\Omega^{2j}_A(M))\leq d^2\thickspace \text{for all}\thickspace j\in J\cap -\mathbb{N}_0.\]
Hence there are infinitely many $n\in \mathbb{N}_0$, such that $\dim \Ext^{2n}_A(M,M)\leq d^2$. On the other hand since $\Theta$ is not periodic, it follows from the foregoing theorem that $\cx_A M \geq 2$, hence $\dim \Ext^{2n}_A(M,M)\geq Cn$ for some $C>0$ and all $n\geq 0$, a contradiction.
\end{proof}

The following result relies on the fact, that for a selfinjective algebra whose modules all have finite complexity the components of a stable Auslander-Reiten quiver have a very special shape. They are constructed as follows: Start with an (unoriented) graph $T$. Then orient it in some way. Then define a new quiver with vertices $\mathbb{Z}\otimes T_0$, where $T_0$ is the vertex set of $T$ and arrows $(i,t)\to (i,t')$ and $(i,t')\to (i+1,t)$ for every arrow $t\to t'$ in $T$ and every integer $i$. This new quiver is denoted by $\mathbb{Z}[T]$, the Auslander-Reiten translation is defined by $\tau((i,t))=(i-1,t)$.

\begin{thm}[cf. {\cite[Theorem 1.2]{Fa98}}, {\cite[Theorem 3.2]{Fa00}}]\label{thetafinite}
Let $A$ be an (fg)-Hopf algebra or an (Fg)-selfinjective algebra and let $\Theta\subseteq \Gamma_s(A)$ be a component. Then $\Theta(d)$ is finite for every $d>0$.
\end{thm}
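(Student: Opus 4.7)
The plan is to bootstrap the preceding lemma by invoking the structure theorem for components of the stable Auslander--Reiten quiver of a selfinjective algebra whose modules all have finite complexity, as previewed in the paragraph before the statement.

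First, by the preceding lemma every $\tau$-orbit $\mathcal{T}\subseteq\Theta$ satisfies $|\mathcal{T}\cap\Theta(d)|<\infty$, so it is enough to prove that only finitely many $\tau$-orbits of $\Theta$ contain a module of dimension at most $d$. This is the only step that has to be added on top of the $\tau$-orbit analysis already carried out.

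For this reduction I would invoke Riedtmann's structure theorem to write $\Theta\cong\mathbb{Z}[T]/\Pi$ for a valued directed tree $T$ and an admissible group $\Pi$ of translation-quiver automorphisms. Under (fg) or (Fg), Theorem~\ref{cxdimsupporthochschild}(i) forces every module in $\modu A$ to have finite complexity, so the Happel--Preiser--Ringel classification restricts $T$ to a finite or infinite Dynkin diagram. If $T/\Pi$ has only finitely many vertices, the $\tau$-orbits of $\Theta$ are finite in number and the preceding lemma concludes. Otherwise $T\in\{A_\infty,A_\infty^\infty,B_\infty,C_\infty,D_\infty\}$ with $T/\Pi$ having infinitely many vertices, and I would exploit the strictly positive additive function $[M]\mapsto\dim M$ on $\Theta$, whose additivity
\[
\dim M+\dim\tau M=\sum_i\dim M_i
\]
comes from the almost split sequence $0\to\tau M\to E\to M\to 0$. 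A standard analysis of positive additive functions on infinite Dynkin translation quivers shows that this function grows without bound along any infinite ray of $T$; hence only finitely many vertices of $T/\Pi$ can parametrise $\tau$-orbits intersecting $\Theta(d)$.

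The main obstacle is to make the last growth statement rigorous in the branching types $A_\infty^\infty$ and $D_\infty$, where $T$ has more than one infinite end so the knitting recursion for the dimension function has to be carried out along each ray of $T$ separately to obtain the required strict increase of $\dim$ with depth. Once this is settled, combining with the preceding lemma gives $|\Theta(d)|<\infty$, as desired.
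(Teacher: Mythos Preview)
Your high-level strategy---reduce to the claim that only finitely many $\tau$-orbits meet $\Theta(d)$, then control the tree type via a Webb/Riedtmann-style structure theorem---is the same as the paper's. The paper separates the periodic case (citing Bautista--Coelho), uses Kerner--Zacharia to get $\Theta\cong\mathbb{Z}[\Delta]$ with $\Delta$ Euclidean or infinite Dynkin in the nonperiodic case, dispatches Euclidean trees by the preceding lemma, cites \cite{MR88} for $\mathbb{A}_\infty^\infty$ and $\mathbb{D}_\infty$, and then gives an explicit mesh argument for $\mathbb{Z}[\mathbb{A}_\infty]$.

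The genuine gap in your outline is the additivity claim. The equality
\[
\dim M+\dim\tau M=\sum_i\dim M_i
\]
fails precisely at meshes that border a projective vertex: in the module category the Auslander--Reiten sequence has the projective as an extra middle summand, so on the stable quiver one only gets $\dim M+\dim\tau M\geq\sum_i\dim M_i$. A merely subadditive positive function on $\mathbb{Z}[\mathbb{A}_\infty]$ need not grow along rays (the constant function $1$ is subadditive), so your ``standard analysis of positive additive functions'' does not apply as stated. This is exactly why the paper's argument for $\mathbb{Z}[\mathbb{A}_\infty]$ first isolates a finite triangular region $\tilde\Upsilon(a,b)$ containing all meshes adjacent to projectives, and then outside that region uses not additivity but the fact that every irreducible map is either injective or surjective to propagate monotonicity of dimensions along the rays. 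The same issue affects tubes $\mathbb{Z}[\mathbb{A}_\infty]/\langle\tau^n\rangle$, which have infinitely many $\tau$-orbits and which your case split does not cover; the paper handles these by citing Bautista--Coelho. (A minor point: over the algebraically closed field assumed throughout, the valued types $\mathbb{B}_\infty,\mathbb{C}_\infty$ do not occur.)
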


\begin{proof}
If $\Theta$ is periodic, this result was obtained by Bautista and Coelho (cf. \cite[Proposition 5.4]{Liu96}). If $\Theta$ is nonperiodic, then by the version of Webb's Theorem proved by Kerner and Zacharia (\cite[Main Theorem]{KZ11}) $\Theta\cong \mathbb{Z}[\Delta]$, where $\Delta$ is a Euclidean or infinite Dynkin diagram. If $\Delta$ is Euclidean, then $\mathbb{Z}[\Delta]$ possesses only finitely many $\tau$-orbits, and the statement follows immediately from the foregoing lemma. If $\Delta$ is $\mathbb{A}_\infty^\infty$ or $\mathbb{D}_\infty$ the statement follows from the proof of \cite[Proposition]{MR88}. Therefore it suffices to prove the statement for $\Theta\cong \mathbb{Z}[\mathbb{A}_\infty]$, obtained by a linearly oriented half-line. We label the vertices of $\Theta$ by $(a,b)$, where $a\in \mathbb{Z}$ as above and $b\in \mathbb{N}$:\\
Since there are only finitely many meshes in $\Theta$ that are associated with projective modules, there exists a vertex $(a,b)$, such that all meshes associated with projective modules occur inside the region $\tilde{\Upsilon}(a,b):=\{(n,m)\thickspace |\thickspace n+m\leq a+b, n\geq a\}$, which is highlighted red in the following diagram:
\[\begin{xy}
\xymatrix@!=0.3cm{
&&&&&{\color{magenta}\dots}\ar[rd]&&{\color{magenta}\dots}\\
&&&&&&{\color{magenta}(a-1,b+2)}\ar[ru]\ar[rd]\\
&&&&&{\color{blue}(a-1,b+1)}\ar[ru]\ar[rd]&&{\color{green}(a,b+1)}\ar[rd]\\
&&&&{\color{blue}(a-1,b)}\ar[ru]\ar[rd]&&{\color{red}(a,b)}\ar[ru]\ar[rd]&&{\color{green}(a+1,b)}\ar[rd]\\
&&&{\color{blue}\dots}\ar[ru]\ar[rd]&&{\color{red}(a,b-1)}\ar[ru]\ar[rd]&&{\color{red}(a+1,b-1)}\ar[ru]\ar[rd]&&{\color{green}\dots}\ar[rd]\\
&&{\color{blue}\dots}\ar[ru]\ar[rd]&&{\color{red}\dots}\ar[ru]\ar[rd]&&{\color{red}\vdots}\ar[ru]\ar[rd]&&{\color{red}\dots}\ar[ru]\ar[rd]&&{\color{green}\dots}\ar[rd]\\
&{\color{blue}\dots}\ar[ru]\ar[rd]&&{\color{red}(a,2)}\ar[ru]\ar[rd]&&{\color{red}\vdots}\ar[ru]\ar[rd]&&{\color{red}\vdots}\ar[ru]\ar[rd]&&{\color{red}(a+b-2,2)}\ar[ru]\ar[rd]&&{\color{green}\dots}\ar[rd]\\
{\color{blue}(a-1,1)}\ar[ru]&&{\color{red}(a,1)}\ar[ru]&&{\color{red}(a+1,1)}\ar[ru]&&{\color{red}\vdots}\ar[ru]&&{\color{red}(a+b-2,1)}\ar[ru]&&{\color{red}(a+b-1,1)}\ar[ru]&&{\color{green}(a+b,1)}\\
}
\end{xy}\]
A mesh outside this region looks as follows:
\[\begin{xy}
\xymatrix{
&(n,m+1)\ar[rd]\\
(n,m)\ar[ru]\ar[rd]&&(n+1,m)\\
&(n+1,m-1)\ar[ru]
}
\end{xy}\]
Since it corresponds to an exact sequence, we have $\dim (n,m+1)-\dim (n+1,m)=\dim (n,m)-\dim (n+1,m-1)$ and since every irreducible morphism is either injective or surjective the surjectivity of the map $(n,m)\to (n+1,m-1)$ implies the surjectivity of the map $(n,m+1)\to (n+1,m)$. But since the map $(n,2)\to (n+1,1)$ is surjective for all $n<a$ this implies the surjectivity of all maps $(n,m)\to (n+1,m-1)$ for all $n<a$, i.e. in the blue and magenta region. Dually the injectivity of the map $(n+1,m-1)\to (n+1,m)$ implies the injectivity of the map $(n,m)\to (n,m+1)$ and since the map $(n,1)\to (n,2)$ is injective, for all $n+m>a+b$ we have the injectivity of all maps $(n,m)\to (n,m+1)$ for $n+m>a+b$, i.e. in the green and magenta region.\\
Therefore for $(n,m)$ with $n<a$ and $n+m\leq a+b$ we have that $\dim (n,m)\geq \min\{a-n+1,m\}$. Now assume that $(n,m)\in \Theta(d)$. Then $d\geq a-n+1$ together with $n+m\leq a+b$ implies that $m\leq a+b-n\leq b+d-1$. Thus there are no $(n,m)\in \Theta(d)$ inside the blue region with $m>b+d-1$.\\
Dually for $(n,m)$ with $n\geq a$ and $n+m>a+b$ we have that $\dim (n,m)\geq \min\{n+m-(a+b),m\}$. Again assume $(n,m)\in \Theta(d)$. Then $d\geq n+m-(a+b)$ together with $n\geq a$ implies that $m\leq d-n+(a+b)\leq d+b$. Thus there are no $(n,m)\in \Theta(d)$ inside the green region with $m>b+d$.\\
In the magenta region we have that $\dim (n,m)\geq m-(b+1)$ and therefore also in this region for $m>d+b-1$ there are no $(n,m)\in \Theta(d)$.\\
Consequently $\Theta(d)\subseteq \{(n,m)|m<b+d-1\}$ is contained in a finite union of $\tau$-orbits, a contradiction to the foregoing lemma.
\end{proof}

Ringel (\cite{Ri96}) showed that Theorem \ref{thetafinite} does not hold for arbitrary selfinjective algebras.

\begin{cor}[{\cite[Corollary 3.3]{Fa00}}]
There are infinitely many components for a block of an (fg)-Hopf algebra or an (Fg)-self injective algebra, which is not of finite representation type.
\end{cor}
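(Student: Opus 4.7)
The plan is to argue by contradiction, combining Theorem \ref{thetafinite} with the Second Brauer--Thrall Theorem, which for finite-dimensional algebras over an algebraically closed field is a theorem (due to Bautista, Bongartz, Fischbacher and others): any such algebra of infinite representation type admits infinitely many dimensions $d$ for which there exist infinitely many pairwise non-isomorphic indecomposables of dimension $d$.

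Let $\mathcal{B}$ be a block of an (fg)-Hopf algebra or (Fg)-selfinjective algebra, and assume for contradiction that $\mathcal{B}$ is of infinite representation type while its stable Auslander--Reiten quiver $\Gamma_s(\mathcal{B})$ consists of only finitely many connected components $\Theta_1,\dots,\Theta_r$. The first step is to apply Theorem \ref{thetafinite} to each $\Theta_i$, which gives that $\Theta_i(d)$ is a finite set for every $d>0$ and every $i\in\{1,\dots,r\}$. Since $\mathcal{B}$, being a finite-dimensional algebra, possesses only finitely many isoclasses of indecomposable projective modules, we deduce that for every fixed $d>0$ the set of isoclasses of indecomposable $\mathcal{B}$-modules of dimension at most $d$ is finite.

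The second step is to observe that this conclusion contradicts the Second Brauer--Thrall Theorem: since $\mathcal{B}$ is of infinite representation type over the algebraically closed field $k$, there must exist some dimension $d_0$ admitting infinitely many pairwise non-isomorphic indecomposable $\mathcal{B}$-modules. But any such module either is projective (of which there are only finitely many) or lies in one of the $\Theta_i$, and hence in $\Theta_i(d_0)$. This contradicts the finiteness established in the first step.

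There is no genuine obstacle here beyond what is already contained in Theorem \ref{thetafinite}; the passage from that theorem to the corollary is routine once the Second Brauer--Thrall Theorem is invoked. The only mild care needed is to ensure that we can legitimately apply Brauer--Thrall II to a single block $\mathcal{B}$, which is immediate because $\mathcal{B}$ is itself a finite-dimensional $k$-algebra of infinite representation type whenever the hypothesis is satisfied.
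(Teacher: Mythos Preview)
Your proof is correct and follows essentially the same approach as the paper's own proof: combine Theorem~\ref{thetafinite} with the second Brauer--Thrall conjecture (a theorem over algebraically closed fields) to obtain the contradiction. The paper's argument is just a more compressed version of what you wrote.
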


\begin{proof}
This follows from the foregoing theorem in conjunction with the second Brauer-Thrall conjecture, which is a theorem in the case of a finite dimensional algebra over an algebraically closed field (cf. \cite[IV.5 Conjecture 2]{ASS06}), that states that for a representation-infinite algebra $A$, there is a dimension $d$, such that there are infinitely many nonisomorphic $A$-modules of dimension $d$.
\end{proof}

\section{General facts on Frobenius-Lusztig kernels}

Here we recall some general facts on Frobenius-Lusztig kernels and fix our notation. It can be found in \cite{Jan96} and \cite{Dru11}.\\
For an indeterminate $q$ let us first recall the notion of $q$-binomial coefficients: $[n]:=\frac{q^n-q^{-n}}{q-q^{-1}}$, $[n]!:=[n]\cdots [2][1]$ and $\begin{bmatrix}m\\n\end{bmatrix}:=\frac{[m]!}{[n]![m-n]!}$. If we specialize $q$ to an element of the field, by abuse of notation we will use the same symbols to denote the elements of the field obtained in that way.\\
Let $\mathfrak{g}$ be a finite dimensional complex simple Lie algebra. Let $\mathbb{U}_\mathbb{Q}(\mathfrak{g})$ be the quantized universal enveloping algebra defined by Drinfeld and Jimbo (cf. \cite[4.3]{Jan96}) with generators $E_i, F_i, K_i^{\pm 1}$ over $\mathbb{Q}(q)$. For $\mathsf{A}:=\mathbb{Z}[q,q^{-1}]$ Lusztig defined an integral form $U_\mathsf{A}(\mathfrak{g})$. For $k$ a field and $\zeta$ a primitive $\ell$-th root of unity, one can extend scalars to $k$ by specializing $q$ to $\zeta$ and define $U_k(\mathfrak{g}):=U_\mathsf{A}(\mathfrak{g})\otimes_{\mathsf{A}} k$. The elements $E_i\otimes 1$, $F_i\otimes 1$ and $K_i^{\pm 1}\otimes$ will by standard abuse of notation again be denoted by $E_i$, $F_i$ and $K_i^{\pm 1}$, respectively. Because one can recover the whole integral representation theory of $U_k(\mathfrak{g})$ from the factor algebra $U_\zeta(\mathfrak{g}):=U_k(\mathfrak{g})/\langle K_i^\ell-1\rangle$ (it corresponds to looking at modules of type $\mathbf{1}$) we confine our attention to this algebra.\\
The small quantum group $u_\zeta(\mathfrak{g})$ is now defined as the subalgebra generated by the images of the elements $E_i,F_i,K_i^{\pm 1}$ under the canonical projection, in the remainder again denoted by the same symbols.\\
If $\charac k=p>0$, then there are higher Frobenius-Lusztig kernels $U_\zeta(G_r)$, defined as the subalgebra of $U_\zeta(\mathfrak{g})$ generated by $E_i, E_i^{(p^j\ell)}, F_i, F_i^{(p^j\ell)}, K_i^{\pm 1}$, where $j<r$ and $E_i^{(n)}$ and $F_i^{(n)}$ denote the specializations of the divided powers $E_i^{(n)}:=\frac{E_i^n}{[n]!}$ and $F_i^{(n)}:=\frac{F_i^n}{[n]!}$. The name derives from the following weak Hopf sequence, where $\mathfrak{g}=\Lie(G)$, $G_r$ denotes the $r$-th Frobenius kernel of the algebraic group $G$ and $\Dist(G_r)$ denotes the algebra of distributions of $G_r$:
\[k\to u_\zeta(\mathfrak{g})\to U_\zeta(G_r)\to \Dist(G_r)\to k,\]
where the surjective map is the quantum Frobenius homomorphism defined by Lusztig. We will also use the notation $U_\zeta(G_0):=u_\zeta(\mathfrak{g})$.\\
We will replace $\mathfrak{g}$ by $\mathfrak{b}$, $\mathfrak{n}$ or $\mathfrak{t}$ if we just look at the subalgebras coming from the $E_i$'s and the $K_i$'s, the $E_i$'s, or the $K_i$'s respectively.\\
The weak Hopf sequence then restricts, e.g. for the Borel subgroup $B_r$ of $G_r$ we get a weak Hopf sequence
\[k\to u_\zeta(\mathfrak{b})\to U_\zeta(B_r)\to \Dist(B_r)\to k.\]
The $\mathfrak{b}$- and $\mathfrak{n}$-quantum groups are related by a smash product construction, i.e. $U_\zeta(B_r)=U_\zeta(N_r)\# U_\zeta(T_r)$. This means the following: Let $H$ be a Hopf algebra and let $A$ be an algebra, which is also an $H$-module via $h\otimes a\mapsto h\cdot a$. Then $h\cdot (aa')=\sum_{(h)}(h_{(1)}\cdot a)(h_{(2)}\cdot a')$ and $h\cdot 1_A=\varepsilon(h)1_A$ for all $a,a'\in A$ and $h\in H$, where we use Sweedler notation $\Delta(h)=\sum_{(h)}h_{(1)}\otimes h_{(2)}$. Then $A$ is called an $H$-module algebra. The smash product $A\# H$ is then defined as the algebra with underlying vector space $A\otimes H$, whose simple tensors are denoted by $a\# h$ and multiplication $(a\# h)(b\# h'):= \sum_{(h)}a(h_{(1)}\cdot b)\# h_{(2)}h'$. This gives us another weak Hopf sequence
\[k\to U_\zeta(N_r)\to U_\zeta(B_r)\to U_\zeta(T_r)\to k.\]

We need the following notation from Lie theory. We denote by $\Phi, \Phi^\vee, \Pi, \Phi^+, X^+$ the set of roots, coroots, simple roots, positive roots, dominant weights respectively. There is a unique positive definite symmetric bilinear form $(-,-)$ on $\mathbb{R}\Phi$, such that $(\alpha,\alpha)=2$. Define $\Phi_\lambda:=\{\alpha\in \Phi| (\lambda+\rho,\alpha^\vee)\in \ell\mathbb{Z}\}$ and $\Phi_\lambda^+:=\Phi_\lambda\cap \Phi^+$. Furthermore for $n\in \mathbb{N}$ denote by $X_n:=\{\lambda\in X^+|0\leq (\lambda,\alpha^\vee)<n\forall \alpha\in \Pi\}$ the set of $n$-restricted weights. The half-sum of the positive roots is denoted by $\rho$.\\

The algebras $U_\zeta(N_r)$ are all local. The simple $u_\zeta(\mathfrak{b})$-modules are all $1$-dimensional and lift to $U_\zeta(B)$, therefore in particular to all of the $U_\zeta(B_r)$. Therefore by Lemma \ref{simples} every simple $U_\zeta(B_r)$-module is $1$-dimensional and of the form $\mu^{[1]}\otimes \lambda$, where $\mu\in X_p^+$ and $\lambda\in X_\ell^+$. One can also use the weak Hopf sequence relating $U_\zeta(N_r)$ and $U_\zeta(B_r)$ to conclude that the simple $U_\zeta(B_r)$ are $1$-dimensional: The trivial representation of $U_\zeta(N_r)$ lifts to $U_\zeta(B_r)$ since this is a Hopf algebra. Therefore all modules are tensor products of the $1$-dimensional representations of $U_\zeta(T_r)$ and the trivial representation. The simple $u_\zeta(\mathfrak{g})$-modules are in general not $1$-dimensional, but they also lift to $U_\zeta(G)$, thus to all $U_\zeta(G_r)$, i.e. again Lemma \ref{simples} is applicable and also all simple $U_\zeta(G_r)$-modules are of the form $ \overline{L}(\mu)^{[1]}\otimes L(\lambda)$, where $\mu\in X_ {p^r}^+$ and $\lambda\in X_\ell^+$ (cf. \cite[Theorem 3.1]{AM11}).\\

For each of the Frobenius-Lusztig kernels $U_\zeta(G_r)$ there is a projective simple module, denoted by $\St_{p^r\ell}$. In characteristic zero, $\St_\ell$ is a projective module for $U_\zeta(G)$. However in positive characteristic $\St_{\ell}$ will not be projective as a module for $U_\zeta(G_r)$.  Proposition \ref{extprojectiveextensionHopf} provides a conceptional explanation for this fact: For $\charac k=p>2h-2$, the projective $u_\zeta(\mathfrak{g})$-modules lift to $U_\zeta(G)$ (they are restrictions of tilting modules, see \cite[p. 15]{And04}),  the algebra of distributions is not semisimple and therefore the trivial $\Dist(G_r)$-module is not projective.\\

For our determination of the representation type of the Frobenius-Lusztig kernels we also need certain indecomposable modules, the quantized induced modules $H^0(\lambda)$ studied in \cite{APW91}, \cite{APW92} and \cite{AW92}. The indecomposability can be derived from \cite[Theorem 3.4]{APW92} and \cite[Proof of Theorem 1.10]{AW92}. The lower bound for their complexity under the weakest known assumptions is stated in \cite{BNPP11}. This is where the restrictions of our parameters in the theorems come from.\\

Applying Proposition \ref{specialblock} to the weak Hopf sequence $k\to u_\zeta(\mathfrak{g})\to U_\zeta(G_r)\to \Dist(G_r)\to k$ and the simple $u_\zeta(\mathfrak{g})$-projective Steinberg module $\St_\ell$ we get that the modules whose composition factors belong to $\ell X+ (\ell-1)\rho$ form a block ideal $\mathcal{B}^{spec}$ equivalent to $\Dist(G_r)$.\\

An algebra $A$ is called symmetric, if $A$ is isomorphic to its dual as a two-sided module. A further application of Lemma \ref{extsimpleextensionHopf} is given by the following:

\begin{prop}
The Frobenius-Lusztig kernels $U_\zeta(G_r)$ are symmetric algebras.
\end{prop}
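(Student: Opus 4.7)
The plan is to apply the standard characterization (see e.g.\ the treatment in Lorenz--Lorenz or Oberst--Schneider) that a finite-dimensional Hopf algebra $H$ is symmetric if and only if $H$ is unimodular and $S^2$ is inner. The weak Hopf sequence $k\to u_\zeta(\mathfrak{g})\to U_\zeta(G_r)\to \Dist(G_r)\to k$ that underlies Lemma~\ref{extsimpleextensionHopf} is what allows both conditions to be reduced to known facts about the outer terms.

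First I would verify that $S^2$ is inner. Lusztig's antipode formulas $S(E_i)=-K_i^{-1}E_i$, $S(F_i)=-F_iK_i$, $S(K_i)=K_i^{-1}$ yield
\[S^2(E_i)=K_i^{-1}E_iK_i,\qquad S^2(F_i)=K_i^{-1}F_iK_i,\qquad S^2(K_i)=K_i.\]
Using the commutation law $K_\lambda E_iK_{-\lambda}=\zeta^{(\lambda,\alpha_i)}E_i$ (and the analogue for $F_i$), together with the identity $(2\rho,\alpha_i)=(\alpha_i,\alpha_i)$, these are precisely the formulas for conjugation by the grouplike $K_{-2\rho}$. Since $-2\rho$ lies in the root lattice, $K_{-2\rho}$ lies in $U_\zeta(G_r)$, and $\Ad(K_{-2\rho})$ and $S^2$ are algebra automorphisms agreeing on a set of generators, hence agreeing everywhere.

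Next I would verify unimodularity. The hyperalgebra $\Dist(G_r)$ is cocommutative and, for reductive $G$, unimodular (Jantzen), while the small quantum group $u_\zeta(\mathfrak{g})$ is unimodular by Kerler's explicit two-sided integral (a product of top powers of the $F_\alpha$'s, a sum of grouplikes over $X/\ell X$, and top powers of the $E_\alpha$'s). For the extension I would write down the candidate
\[\Lambda=\prod_{\alpha\in\Phi^+}F_\alpha^{(p^r\ell-1)}\cdot\Bigl(\sum_{\lambda\in X/p^r\ell X}K_\lambda\Bigr)\cdot\prod_{\alpha\in\Phi^+}E_\alpha^{(p^r\ell-1)}\]
in $U_\zeta(G_r)$ (with the products taken in a fixed ordering of $\Phi^+$) and check $a\Lambda=\Lambda a=\varepsilon(a)\Lambda$ for $a$ running through the generators $E_i,F_i,K_i^{\pm 1}$ and the divided powers $E_i^{(p^j\ell)},F_i^{(p^j\ell)}$, using the standard $q$-commutation identities and the fact that the respective pieces of $\Lambda$ are already integrals of $u_\zeta(\mathfrak{g})$ and $\Dist(G_r)$.

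The main obstacle is the bookkeeping in the unimodularity step: in characteristic $p$ the algebra $\Dist(G_r)$ is not semisimple, so the integral has to coordinate the sum over grouplikes with the top-divided-power factors, and one must verify invariance against both the small-quantum-group generators and the Frobenius-Lusztig divided powers. Once this invariance is established, the Hopf algebra criterion immediately gives that $U_\zeta(G_r)$ is symmetric.
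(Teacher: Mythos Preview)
Your framework is exactly the one the paper uses: a finite-dimensional Hopf algebra is symmetric once it is unimodular and $S^2$ is inner. Your verification that $S^2=\Ad(K_{-2\rho})$ is precisely what the paper cites from \cite[4.9 (1)]{Jan96}. The divergence, and the gap, is entirely in the unimodularity step.

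Your candidate integral contains the factor $\sum_{\lambda\in X/p^r\ell X}K_\lambda$, but this expression does not make sense in $U_\zeta(G_r)$: recall that $U_\zeta(\mathfrak{g})$ is already the quotient by $K_i^\ell-1$, so the grouplike elements in the Cartan part only run over $X/\ell X$ (more precisely the root lattice modulo $\ell$). The Cartan subalgebra $U_\zeta(T_r)$ does have dimension $(p^r\ell)^n$, but it is \emph{not} a group algebra; the additional basis elements are the divided-power type elements $\begin{bmatrix}K_i\\ p^j\ell\end{bmatrix}$, which are not grouplike. So the ``torus piece'' of your $\Lambda$ has to be replaced by something that also accounts for these, and then the invariance check against the generators $E_i^{(p^j\ell)}$, $F_i^{(p^j\ell)}$, $\begin{bmatrix}K_i\\ p^j\ell\end{bmatrix}$ is no longer a routine transcription of Kerler's argument. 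This is exactly the bookkeeping you flag as the main obstacle, and as written the formula is not correct.

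The paper sidesteps all of this. Unimodularity is equivalent to the modular function---a one-dimensional representation of the Hopf algebra---being trivial, so it suffices to show that $U_\zeta(G_r)$ has a \emph{unique} one-dimensional module. This follows at once from Lemma~\ref{simples} applied to the weak Hopf sequence $k\to u_\zeta(\mathfrak{g})\to U_\zeta(G_r)\to \Dist(G_r)\to k$: both outer terms have a unique one-dimensional module (for $u_\zeta(\mathfrak{g})$ via the Weyl group action on weights \cite[1.7]{AW92}, for $\Dist(G_r)$ classically \cite[1.19 (1)]{Jan03}), and every simple $U_\zeta(G_r)$-module is of the form $M^{[1]}\otimes L$ with $M$, $L$ simple for the outer terms, which is one-dimensional only when both are trivial. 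This replaces your entire integral computation by a two-line argument.
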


\begin{proof}
It is well-known that $u_\zeta(\mathfrak{g})$ has a unique one-dimensional module: This can be derived from the fact that the Weyl group acts on the set of weights for a $U_\zeta(\mathfrak{g})$-module $M$ (\cite[1.7]{AW92}). The same is known for $\Dist(G_r)$ (cf. \cite[1.19 (1)]{Jan03}). Thus there is also a unique one-dimensional module for $U_\zeta(G_r)$ by Lemma \ref{extsimpleextensionHopf}. Since the antipode squared for $U_\zeta(G_r)$ is inner (see \cite[4.9 (1)]{Jan96}), the result follows (see e.g. \cite[4.6 Remark 2)]{Gor01}).
\end{proof}

\section{Representation type of half-quantum groups}

We are now concerned with the main part of this paper, the determination of the representation type of Frobenius-Lusztig kernels. In this section we show that the Borel and nilpotent part of the Frobenius-Lusztig kernels are wild except for the two known representation-finite cases $u_\zeta(\mathfrak{b}_{\mathfrak{sl}_2})$ and $u_\zeta(\mathfrak{n}_{\mathfrak{sl}_2})$. The algebra $u_\zeta(\mathfrak{n}_{\mathfrak{sl}_2})$ is just $k[X]/X^\ell$, which is a Nakayama algebra (i.e. all projective and injective indecomposables are uniserial) with $\ell$ indecomposable modules. Its smash product algebra with $u_\zeta(\mathfrak{t})\cong k\mathbb{Z}_\ell$, $u_\zeta(\mathfrak{b}_{\mathfrak{sl}_2})$, is also a Nakayama algebra with $\ell^2$ indecomposable modules (cf. \cite[2.4 (i)]{RR85})

\begin{lem}\label{cxh0+}
Let $\ell>1$ be an odd integer not divisible by $3$ if $\Phi$ is of type $\mathbb{G}_2$ and let $\zeta$ be a primitive $\ell$-th root of unity. Then $\cx_{u_\zeta(\mathfrak{b})}k\geq |\Phi^+|-|\Phi_0^+|$.
\end{lem}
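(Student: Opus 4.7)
The plan is to realize $\cx_{u_\zeta(\mathfrak{b})}k$ as the Krull dimension of the cohomology ring $H^\bullet(u_\zeta(\mathfrak{b}),k)$ and to exhibit explicitly a polynomial subring of dimension $|\Phi^+|-|\Phi_0^+|$. As a first step I would reduce to the nilpotent part using the weak Hopf sequence $k\to u_\zeta(\mathfrak{n})\to u_\zeta(\mathfrak{b})\to u_\zeta(\mathfrak{t})\to k$. Since a primitive $\ell$-th root of unity exists in $k$, the integer $\ell$ is invertible in $k$, so the group algebra $u_\zeta(\mathfrak{t})\cong k[\mathbb{Z}_\ell^n]$ is semisimple. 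The associated Lyndon--Hochschild--Serre spectral sequence (an instance of the Grothendieck spectral sequence used in the proof of Lemma \ref{extsimpleextensionHopf}) therefore degenerates on the $p=0$ edge, giving a graded-algebra isomorphism
\[
H^\bullet(u_\zeta(\mathfrak{b}),k)\cong H^\bullet(u_\zeta(\mathfrak{n}),k)^{u_\zeta(\mathfrak{t})}.
\]
By Theorem \ref{cxdimsupporthochschild}, together with the fact that the trivial module has full support variety, $\cx_{u_\zeta(\mathfrak{b})}k$ equals the Krull dimension of this invariant subring.

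For the second step I would construct enough classes on the $u_\zeta(\mathfrak{n})$ side. Each positive root $\alpha\in\Phi^+$ yields, via the defining relation $E_\alpha^\ell=0$ and the quantum Koszul-type resolution employed in \cite{BNPP11} in the spirit of De Concini--Kac, a cohomology class $\xi_\alpha\in H^2(u_\zeta(\mathfrak{n}),k)$ of $u_\zeta(\mathfrak{t})$-weight $\ell\alpha\in \ell X$. Since the characters of $u_\zeta(\mathfrak{t})\cong k[\mathbb{Z}_\ell^n]$ are indexed by $X/\ell X$, every such class automatically lies in the invariant subring. Under the hypotheses on $\ell$ of the lemma, \cite{BNPP11} establishes that the subfamily $\{\xi_\alpha\mid \alpha\in \Phi^+\setminus\Phi_0^+\}$ is algebraically independent in $H^\bullet(u_\zeta(\mathfrak{n}),k)$, the arithmetic condition $(\rho,\alpha^\vee)\in\ell\mathbb{Z}$ cutting out $\Phi_0^+$ being precisely what can obstruct the survival of $\xi_\alpha$ through the differentials and extension problems of that spectral sequence.

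Combining the two steps, one obtains an embedding of the polynomial ring $k[\xi_\alpha\mid \alpha\in \Phi^+\setminus\Phi_0^+]$, whose Krull dimension is $|\Phi^+|-|\Phi_0^+|$, into $H^\bullet(u_\zeta(\mathfrak{b}),k)$, and the desired inequality is immediate. The main obstacle is the algebraic-independence assertion of the second step: even once $|\Phi^+|$ Koszul classes have been produced on the associated graded, it is not automatic that those indexed by $\Phi^+\setminus\Phi_0^+$ descend to algebraically independent classes in $H^\bullet(u_\zeta(\mathfrak{n}),k)$ under the weakest known constraints on $\ell$. This is exactly the content of the detection theorem in \cite{BNPP11} and is the source of the parameter conditions (oddness of $\ell$, together with coprimality to $3$ in type $\mathbb{G}_2$) appearing in the hypothesis.
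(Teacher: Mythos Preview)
Your outline is essentially the approach the paper points to. The paper's own proof is a bare citation: it says the inequality follows by running the argument of \cite{VIGRE07} as observed in \cite[Lemma 4.1]{FW09}, noting only that the hypothesis $k=\mathbb{C}$ used there is unnecessary. What you have sketched---the Lyndon--Hochschild--Serre reduction to $H^\bullet(u_\zeta(\mathfrak{n}),k)^{u_\zeta(\mathfrak{t})}$, the construction of degree-two classes $\xi_\alpha$ via a Koszul/May-type spectral sequence on the associated graded, and the survival of the $\xi_\alpha$ with $\alpha\in\Phi^+\setminus\Phi_0^+$ as algebraically independent permanent cycles---is precisely the VIGRE argument. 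The only discrepancy is bibliographic: you attribute the algebraic-independence step to \cite{BNPP11}, whereas the paper credits \cite{VIGRE07}; the two references cover overlapping ground, and the underlying mechanism is the same.

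One small overreach: you invoke Theorem~\ref{cxdimsupporthochschild} to identify $\cx_{u_\zeta(\mathfrak{b})}k$ with the Krull dimension of the cohomology ring, which presupposes (fg). For the stated \emph{inequality} this is not needed: since $u_\zeta(\mathfrak{b})$ is Frobenius, $\cx_{u_\zeta(\mathfrak{b})}k=\gamma\bigl(H^\bullet(u_\zeta(\mathfrak{b}),k)\bigr)$, and the existence of a polynomial subring in $|\Phi^+|-|\Phi_0^+|$ generators of positive degree already forces the growth rate to be at least that number. This is the cleaner way to phrase the final step and avoids any appeal to finiteness properties you have not yet established.
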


\begin{proof}
As noted in \cite[Lemma 4.1]{FW09} this can be proven by following the steps in \cite{VIGRE07}. It is not necessary to assume that $k=\mathbb{C}$ as Feldvoss and Witherspoon do.
\end{proof}

\begin{thm}[{\cite[Theorem 4.3]{FW09}}, cf. {\cite[Proposition 3.3]{Cib97}} for $\mathfrak{g}$ simply-laced]\label{borelwild}
Let $\ell>1$ be an odd integer not divisible by $3$ if $\Phi$ is of type $\mathbb{G}_2$. Then $\cx_{u_\zeta(\mathfrak{b})} k\geq 3$ for $\mathfrak{g}\neq \mathfrak{sl}_2$. In particular the connected algebra $u_\zeta(\mathfrak{b})$ has wild representation type except for $\mathfrak{g}=\mathfrak{sl}_2$.
\end{thm}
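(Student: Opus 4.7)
The plan is to bound the complexity $\cx_{u_\zeta(\mathfrak{b})} k$ of the trivial module from below by $3$ and then invoke the Feldvoss--Witherspoon generalization of Farnsteiner's theorem, by which any selfinjective algebra carrying a module of complexity at least $3$ is of wild representation type. Lemma \ref{cxh0+} already provides the lower bound $\cx_{u_\zeta(\mathfrak{b})} k \geq |\Phi^+|-|\Phi_0^+|$, so the theorem reduces to the purely combinatorial inequality $|\Phi^+|-|\Phi_0^+| \geq 3$ for every simple complex Lie algebra $\mathfrak{g}\neq \mathfrak{sl}_2$ and every admissible $\ell$.

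Unravelling the definition, $\Phi_0^+ = \{\alpha \in \Phi^+ : (\rho,\alpha^\vee) \in \ell\mathbb{Z}\}$ consists of the positive roots whose coroots have height $(\rho,\alpha^\vee)$ divisible by $\ell$, where the height of a coroot is the sum of its coefficients in the simple coroots (since $(\rho,\alpha_j^\vee)=1$ for every simple coroot). These heights are bounded above by the dual Coxeter number minus one, so whenever $\ell$ exceeds this bound the set $\Phi_0^+$ is empty and the inequality follows from $|\Phi^+| \geq 3$, which holds in every type of rank at least $2$. Only finitely many small values of $\ell$ remain, and they can be dealt with type by type: for $A_n$ with $n \geq 2$ the coroot heights traverse $1,\ldots,n$ with explicit multiplicities, and a direct count shows the difference never drops below $3$; the analogous counts handle $B_n, C_n, D_n$, while the exceptionals $E_6, E_7, E_8, F_4, G_2$ (the last under the stated condition $3 \nmid \ell$) are verified by inspection against the standard coroot-height tables.

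The tightest cases are the smallest ranks: in $A_2$ with $\ell=3$ the coroot heights are $1,1,2$, so $\Phi_0^+ = \emptyset$ and the difference equals exactly $3$; in $B_2$ with $\ell=3$ the coroot heights are $1,1,2,3$, so $|\Phi_0^+|=1$ and the difference is again exactly $3$; the condition $3 \nmid \ell$ for type $G_2$ is imposed already in Lemma \ref{cxh0+} and not by the present combinatorics. Once $\cx_{u_\zeta(\mathfrak{b})} k \geq 3$ is secured in every case, the Feldvoss--Witherspoon wildness criterion immediately delivers the conclusion. The main (and essentially only) technical obstacle is thus the routine but tedious case-by-case combinatorial verification across the classification of simple Lie types; no other ingredient beyond Lemma \ref{cxh0+} and the cited wildness criterion is needed.
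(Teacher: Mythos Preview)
Your proposal is correct and follows the same route as the paper, which simply defers to \cite[Theorem 4.3]{FW09} after invoking Lemma~\ref{cxh0+}; you are effectively spelling out what that deferred argument does, namely verifying $|\Phi^+|-|\Phi_0^+|\geq 3$ and then applying the Feldvoss--Witherspoon wildness criterion.

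Two small imprecisions are worth flagging. First, the maximum of $(\rho,\alpha^\vee)$ over $\alpha\in\Phi^+$ is $h-1$ (the Coxeter number minus one), not $h^\vee-1$; the maximum is attained at the highest \emph{short} root, whose coroot is the highest root of $\Phi^\vee$. Your $B_2$ example already shows this: the coroot heights $1,1,2,3$ have maximum $3=h-1$, while $h^\vee-1=2$. This does not damage your argument because you carry out the explicit low-rank checks anyway. In fact the case analysis collapses further once you observe that every simple root $\alpha_i$ satisfies $(\rho,\alpha_i^\vee)=1\notin\ell\mathbb{Z}$, so $|\Phi^+|-|\Phi_0^+|\geq n$ and only the rank-two types $A_2$, $B_2\cong C_2$, $G_2$ need individual treatment. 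Second, the wildness criterion you cite requires $u_\zeta(\mathfrak{b})$ to satisfy (fg), not merely to be selfinjective; this holds by \cite[Theorem 6.2.6]{Dru11} (the case $r=0$), and is what the paper uses in Corollary~\ref{reptypeBr1}.
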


\begin{proof}
The proof in \cite{FW09} applies verbatim if one observes the validity of Lemma \ref{cxh0+} in this case.
\end{proof}

From this result we are able to prove a first step towards the determination of the representation type of $U_\zeta(B_r)$ for higher $r$:

\begin{cor}\label{reptypeBr1}
Let $\ell>1$ be an odd integer not divisible by $3$ if $\Phi$ is of type $\mathbb{G}_2$. If $U_\zeta(B_r)$ is of tame or finite representation type then $\mathfrak{g}=\mathfrak{sl}_2$ and $r\leq 1$.
\end{cor}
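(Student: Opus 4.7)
The strategy is to establish the contrapositive: if $\mathfrak{g} \neq \mathfrak{sl}_2$ or $r \geq 2$, then $U_\zeta(B_r)$ is of wild representation type. In either regime the plan is to exhibit a $U_\zeta(B_r)$-module of complexity at least $3$ and then apply the Feldvoss-Witherspoon generalization of Farnsteiner's wildness criterion, which is applicable since $U_\zeta(B_r)$ is a self-injective Hopf algebra satisfying (fg).

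For $\mathfrak{g} \neq \mathfrak{sl}_2$, I would propagate high complexity upward through the weak Hopf sequence $k \to u_\zeta(\mathfrak{b}) \to U_\zeta(B_r) \to \Dist(B_r) \to k$. By the Nichols--Z\"oller theorem, $U_\zeta(B_r)$ is free as a module over its Hopf subalgebra $u_\zeta(\mathfrak{b})$, so restriction of scalars sends projective $U_\zeta(B_r)$-modules to projective $u_\zeta(\mathfrak{b})$-modules. Consequently a minimal projective resolution of the trivial module $k$ over $U_\zeta(B_r)$ restricts to a (generally non-minimal) projective resolution of $k$ over $u_\zeta(\mathfrak{b})$, yielding $\cx_{U_\zeta(B_r)}(k) \geq \cx_{u_\zeta(\mathfrak{b})}(k) \geq 3$ by Theorem \ref{borelwild}, and wildness follows.

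For the remaining case $\mathfrak{g} = \mathfrak{sl}_2$ and $r \geq 2$, the trivial module over $u_\zeta(\mathfrak{b}_{\mathfrak{sl}_2})$ has complexity only $1$, so the restriction device fails and I would instead descend to the quotient via the quantum Frobenius surjection $U_\zeta(B_r) \twoheadrightarrow \Dist(B_r)$: any wild family of $\Dist(B_r)$-modules pulls back to a wild family of $U_\zeta(B_r)$-modules, so it is enough to show that $\Dist(B_r)$ itself is wild. Applying the analogous argument to the weak Hopf sequence $k \to \Dist(N_r) \to \Dist(B_r) \to \Dist(T_r) \to k$ with $\Dist(N_r) = \Dist(\mathbb{G}_a^{(r)}) \cong k[x_0,\ldots,x_{r-1}]/(x_0^p,\ldots,x_{r-1}^p)$ (whose trivial module has complexity $r$), the freeness of $\Dist(B_r)$ over $\Dist(N_r)$ gives $\cx_{\Dist(B_r)}(k) \geq r$, which already yields wildness for $r \geq 3$. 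The boundary subcase $r = 2$ is the main obstacle: the bound from the nilpotent radical alone is only $2$ and must be sharpened to $3$ by combining it with the toral contribution coming from $\Dist(T_r)$, either via the Lyndon--Hochschild--Serre spectral sequence attached to the above weak Hopf sequence, or by invoking the Farnsteiner--Voigt classification of representation type of Frobenius kernels of solvable algebraic groups.
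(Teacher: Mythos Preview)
Your overall strategy matches the paper's proof exactly: for $\mathfrak{g}\neq\mathfrak{sl}_2$ propagate complexity up from the Hopf subalgebra $u_\zeta(\mathfrak{b})$, and for $\mathfrak{g}=\mathfrak{sl}_2$ with $r\geq 2$ descend to the quotient $\Dist(B_r)$ and show that the latter is wild.

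The only inaccuracy is in your handling of the boundary case $r=2$. You suggest that the complexity bound $\cx_{\Dist(B_2)}(k)\geq 2$ coming from $\Dist(N_2)$ ``must be sharpened to $3$ by combining it with the toral contribution coming from $\Dist(T_r)$''. This cannot work: $\Dist(T_r)$ is semisimple, so it contributes nothing to cohomology, and the Lyndon--Hochschild--Serre spectral sequence for $k\to\Dist(N_r)\to\Dist(B_r)\to\Dist(T_r)\to k$ degenerates to $H^\bullet(\Dist(B_r),k)\cong H^\bullet(\Dist(N_r),k)^{\Dist(T_r)}$, which can only \emph{lower} the growth rate. In fact $\cx_{\Dist(B_2)}(k)=2$, so no complexity argument alone will give wildness here.

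The paper (and your second suggestion) resolves this differently: it does not try to push the complexity to $3$. Instead it invokes the Farnsteiner--Voigt classification \cite[Corollary 2.6]{FV03} to conclude directly that $\Dist(B_r)$ is not tame for $r\geq 2$, and then uses the complexity bound $\cx_{\Dist(B_r)}(k)\geq\cx_{\Dist(G_{a(r)})}(k)=r\geq 2$ only to rule out finite representation type. The two statements together yield wildness. So your proposal is correct once you drop the spectral-sequence alternative and recognise that the Farnsteiner--Voigt route replaces, rather than supplements, the complexity-$3$ criterion.
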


\begin{proof}
The algebra $U_\zeta(B_r)$ contains $u_\zeta(\mathfrak{b})$ as a Hopf subalgebra. Therefore $\cx_{u_\zeta(\mathfrak{b})} k\leq \cx_{U_\zeta(B_r)} k$ for all $r\geq 0$ (cf. \cite[Proposition 2.1 (2)]{FW09}). The foregoing theorem shows that the former is greater than two. Thus the algebras are wild by \cite[Theorem 3.1]{FW09} since $U_\zeta(B_r)$ satisfies (fg) by \cite[Theorem 6.2.6]{Dru11}. For $\mathfrak{g}=\mathfrak{sl}_2$ note that $U_\zeta(B_r)//u_\zeta(\mathfrak{b})\cong \Dist(B_r)$. The latter is known to be wild for $r\geq 2$: By \cite[Corollary 2.6]{FV03}, we have that it is not tame. But the $r$-th Frobenius kernel of the additive group $G_{a(r)}$ is contained in $B_r$ and $\Dist(G_{a(r)})\cong k[X_1,\dots,X_r]/(X_1^p,\dots,X_r^p)$. Therefore $\cx_{\Dist(B_r)} k\geq \cx_{\Dist(G_{a(r)})}k\geq 2$. Thus $\Dist(B_r)$ is not representation-finite.
\end{proof}

For the remaining case the complexity does not provide any information. We therefore use a different tool: Let $Q$ be a finite quiver (an oriented graph). Then the path algebra $kQ$ is defined as the algebra with basis the set of paths in $Q$ and multiplication defined as the concatenation if defined, and zero in the other cases (see e.g. \cite[p. 50]{ARS95} for a more precise definition). Then a well-known theorem by Gabriel states that every algebra is Morita equivalent to the path algebra of a quiver modulo some ideal (whose generators are called relations). Because of its combinatorial nature, the representation theory of path algebras with relations is often quite well understood. We will use this in the following for the quantized analogue of \cite[Proposition 2.2]{FV03}:

\begin{prop}
Let $\ell>1$ be an odd integer. For $\mathfrak{g}=\mathfrak{sl}_2$, the algebra $U_\zeta(B_1)$ has the following quiver and relations: It has vertices labelled by $(i,j)$, where $i\in \mathbb{Z}_\ell$ and $j\in \mathbb{Z}_p$, arrows $(i,j)\stackrel{a_{ij}}{\to} (i+1,j+\xi)$ and $(i,j)\stackrel{b_{ij}}{\to} (i,j+1)$ and relations $b_{i+1,j+\xi}a_{ij}-a_{i,j+1}b_{ij}$ and $a_{i+\ell-1,j+(\ell-1)\xi}\cdots a_{ij}$ and $b_{i+(p-1),j+p-1}\cdots b_{ij}$.
\end{prop}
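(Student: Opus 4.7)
The plan is to combine the foregoing homological lemmas with the smash product decomposition $U_\zeta(B_1)\cong U_\zeta(N_1)\# U_\zeta(T_1)$ to first pin down the Gabriel quiver, then the relations, and finally to match dimensions. For the vertices I would apply Lemma \ref{simples} to the weak Hopf sequence $k\to u_\zeta(\mathfrak{b})\to U_\zeta(B_1)\to \Dist(B_1)\to k$: for $\mathfrak{g}=\mathfrak{sl}_2$ both $u_\zeta(\mathfrak{b})$ and $\Dist(B_1)$ have only one-dimensional simples, namely the characters of their tori $u_\zeta(\mathfrak{t})\cong k\mathbb{Z}_\ell$ and $\Dist(T_1)$, and each such simple lifts to $U_\zeta(B_1)$. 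Thus the $\ell p$ simples of $U_\zeta(B_1)$ are $S_{i,j}:=\chi_j^{[1]}\otimes k_i$ with $i\in \mathbb{Z}/\ell$ and $j\in \mathbb{Z}/p$, giving exactly the stated vertex set.

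To obtain the arrows I would invoke Lemma \ref{extsimpleextensionHopf}; its hypothesis $\Ext^1_{u_\zeta(\mathfrak{b})}(k_i,k_i)=0$ is automatic because the Gabriel quiver of $u_\zeta(\mathfrak{b}_{\mathfrak{sl}_2})$ has arrows $k_i\to k_{i+2}$ and $\ell$ is odd. When $i_1=i_2$ the lemma collapses to $\Ext^1_{\Dist(B_1)}(\chi_{j_1},\chi_{j_2})$, which is one-dimensional precisely when $j_2-j_1$ equals the simple root in $\mathbb{Z}/p$; multiplication by $2^{-1}$ on $\mathbb{Z}/p$ (possible since $p$ is odd) then normalises this to the arrow $b_{i,j}\colon(i,j)\to(i,j+1)$. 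When $i_1\neq i_2$ the lemma gives $\Hom_{\Dist(B_1)}(\chi_{j_1},\chi_{j_2}\otimes \Ext^1_{u_\zeta(\mathfrak{b})}(k_{i_1},k_{i_2})^{[-1]})$, which vanishes unless $i_2-i_1\equiv 2\pmod{\ell}$; in that case $\Ext^1_{u_\zeta(\mathfrak{b})}(k_{i_1},k_{i_1+2})$ is one-dimensional and carries some $\Dist(T_1)$-weight $\xi$, forcing $j_2=j_1+\xi$. An analogous rescaling of $\mathbb{Z}/\ell$ then produces the arrow $a_{i,j}\colon(i,j)\to(i+1,j+\xi)$.

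For the relations I would exploit $U_\zeta(B_1)\cong U_\zeta(N_1)\# U_\zeta(T_1)$, where for $\mathfrak{sl}_2$ the nilpotent part $U_\zeta(N_1)$ is generated by $E$ and $E^{(\ell)}$. Standard identities in Lusztig's integral form give $E^\ell=[\ell]!\,E^{(\ell)}=0$ (since $[\ell]!=0$ at $\zeta$), $(E^{(\ell)})^p=p!\,E^{(p\ell)}=0$ (by induction from the quantum Lucas identity $\binom{(a+1)\ell}{\ell}_\zeta=a+1$ together with $p!\equiv 0\pmod{p}$), and the key commutation $E\cdot E^{(\ell)}=[\ell+1]_\zeta E^{(\ell+1)}=E^{(\ell)}\cdot E$. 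Hence $U_\zeta(N_1)\cong k[E,E^{(\ell)}]/(E^\ell,(E^{(\ell)})^p)$ is a commutative truncated polynomial ring of dimension $\ell p$, and transporting these identities through the smash product with the diagonal $U_\zeta(T_1)$-action translates directly into the stated relations $b_{i+1,j+\xi}a_{i,j}=a_{i,j+1}b_{i,j}$, $a^\ell=0$ and $b^p=0$. Since the bound path algebra has basis $\{a^sb^t e_{(i,j)}\mid 0\le s<\ell,\ 0\le t<p\}$ at each of the $\ell p$ vertices, its dimension is $(\ell p)^2=\dim U_\zeta(B_1)$, forcing the natural surjection from the bound path algebra onto $U_\zeta(B_1)$ to be an isomorphism. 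The main obstacle I foresee is not any of these algebraic steps but the bookkeeping that pins down $\xi$: identifying the precise $\Dist(T_1)$-weight carried by a Yoneda representative of the non-split extension of two characters of $u_\zeta(\mathfrak{b})$ is delicate, because the $u_\zeta(\mathfrak{t})$- and $\Dist(T_1)$-actions are easy to conflate under the identification of the tori.
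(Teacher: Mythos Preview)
Your approach is essentially correct and reaches the same conclusion, but by a genuinely different route from the paper for the determination of the quiver. The paper never invokes Lemma~\ref{simples} or Lemma~\ref{extsimpleextensionHopf} here; instead it works entirely inside the smash product $U_\zeta(B_1)=U_\zeta(N_1)\#U_\zeta(T_1)$, computes the adjoint $U_\zeta(T_1)$-action on the generators $E$ and $E^{(\ell)}$ explicitly (this is where $\xi$ enters, via $\Ad\!\begin{bmatrix}K\\\ell\end{bmatrix}(E)=\xi'E$), decomposes $U_\zeta(T_1)$ into primitive idempotents $u_\lambda$, and then writes down mutually inverse maps $kQ/I\leftrightarrow U_\zeta(B_1)$ by hand. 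Your use of the Section~1 lemmas to read off the $\Ext^1$-quiver is more conceptual and explains \emph{why} the two arrow types appear, while the paper's adjoint computation has the advantage of simultaneously producing the concrete lifts $Eu_{ij}$ and $E^{(\ell)}u_{ij}$ needed to check the relations. For the relations and the dimension count your argument and the paper's coincide in spirit, though the paper closes by constructing an explicit inverse rather than counting dimensions.

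Two small points deserve tightening. First, after computing the $\Ext^1$-quiver abstractly you switch to the smash product description to verify the relations; this requires identifying your abstract arrows with the concrete elements $Eu_{ij}$ and $E^{(\ell)}u_{ij}$, i.e.\ checking that these lie in $\rad\setminus\rad^2$ with the correct bidegree. This is exactly what the paper's formula $u_\mu x u_\lambda=\delta_{\lambda+\alpha,\mu}xu_\lambda$ establishes, and you should make the link explicit. Second, your normalisation by $2^{-1}$ in $\mathbb{Z}/p$ tacitly assumes $p$ is odd; this is harmless in the paper's applications (where $p\ge 3$ is assumed shortly afterwards), but it is worth flagging, and indeed the difficulty you anticipate with pinning down $\xi$ is precisely where the paper invests its explicit calculation.
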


\begin{proof}
We regard $U_\zeta(N_1)$ as a module for the adjoint action of $U_\zeta(T_1)$. We have the following equalities and use the relations given in \cite[Theorem 9.3.4]{CP94} (the numbers in brackets refer to the number of the equation in this Theorem) and \cite[Proof of Proposition 11.2.4]{CP94}:
$\Ad(K)(E)\stackrel{(8)}{=}\zeta^2E$, $\Ad(K)(E^{(\ell)})\stackrel{(8)}{=}E^{(\ell)}$ and
\begin{align*}
\Ad(\begin{bmatrix}K\\\ell\end{bmatrix})(E^{(\ell)})&=\sum_{s=0}^{\ell}\begin{bmatrix}K\\\ell-s\end{bmatrix}K^{-s}E^{(\ell)}S(K^{\ell-s}\begin{bmatrix}K\\s\end{bmatrix})\stackrel{(8+9)}{=}E^{(\ell)}\sum_{s=0}^{\ell}\begin{bmatrix}K;2\ell\\\ell-s\end{bmatrix}K^{-s}S(K^{\ell-s}\begin{bmatrix}K\\s\end{bmatrix})\\
&\stackrel{(5)}=E^{(\ell)}\sum_{s=0}^{\ell}\sum_{t=0}^{\ell-s}\zeta^{2\ell(\ell-s-t)}\begin{bmatrix}2\ell\\t\end{bmatrix}K^{-t}\begin{bmatrix}K\\\ell-s-t\end{bmatrix}K^{-s}S(K^{\ell-s}\begin{bmatrix}K\\s\end{bmatrix})\\
&=E^{(\ell)}\sum_{t=0}^{\ell}\sum_{s=0}^{\ell-t}\begin{bmatrix}2\ell\\t\end{bmatrix}K^{-t}\begin{bmatrix}K\\\ell-s-t\end{bmatrix}K^{-s}S(K^{\ell-s}\begin{bmatrix}K\\s\end{bmatrix})\\
&=E^{(\ell)}\sum_{t=0}^{\ell}\begin{bmatrix}2\ell\\ t\end{bmatrix}K^{-t}\varepsilon(\begin{bmatrix}K\\\ell-t\end{bmatrix})=\begin{bmatrix}2\ell\\\ell\end{bmatrix}E^{(\ell)}
\end{align*}
The last two equations use the fact that $\varepsilon(a)=\sum_{(a)}a_{(1)}S(a_{(2)})$ and that $\begin{bmatrix}2\ell\\t\end{bmatrix}=0$ for $t<\ell$.\\
For the action of $\begin{bmatrix}K\\\ell\end{bmatrix}$ on $E$ observe that since $u_\zeta(\mathfrak{b})$ is a normal Hopf subalgebra of $U_\zeta(B)$, we have that $\Ad(\begin{bmatrix}K\\\ell\end{bmatrix})(E)\in u_\zeta(\mathfrak{b})$. Furthermore since $U_\zeta(N_1)$ is a $U_\zeta(T_1)$-module via the adjoint action, we have that $\Ad(\begin{bmatrix}K\\\ell\end{bmatrix})(E)\in U_\zeta(N_1)$. But $U_\zeta(N_1)\cap u_\zeta(\mathfrak{b})=u_\zeta(\mathfrak{n})$. Now the $\mathbb{Z}$-grading of $U_\zeta(B)$ induced by $E\mapsto 1$ and $K\mapsto 0$ yields that $\Ad(\begin{bmatrix}K\\\ell\end{bmatrix})(E)=\xi' E$ for some $\xi'\in k\setminus \{0\}$.  \\
Therefore $\langle E\rangle_k$ and $\langle E^{(\ell)}\rangle_k$ are simple modules with respect to the action of $U_\zeta(T_1)$. Since the character group is $X_{p\ell}\cong \mathbb{Z}_\ell\times \mathbb{Z}_p$, we identify the set of characters with pairs $(i,j)$, where $i\in \mathbb{Z}_\ell$ and $j\in \mathbb{Z}_p$, such that addition with the character corresponding to $E^{(\ell)}$ amounts to addition of $1$ in the second component (resp. addition with the character corresponding to $E$  amounts to addition of $1$ in the first component and with $\xi$ in the second component, where $\zeta^{\xi}=\xi'$). Now decompose $U_\zeta(T_1)=\bigoplus ku_{\lambda}$ with primitive orthogonal idempotents $u_{\lambda}$. Accordingly we have $au_{\lambda}=\lambda(a)u_{\lambda}$ for all $a\in U_\zeta(T_1)$. Hence $\lambda(u_{\lambda'})=\delta_{\lambda,\lambda'}$ for $\lambda, \lambda'\in X_{p\ell}$.\\
Since $U_\zeta(T_1)$ is semisimple and commutative, any finite dimensional module is the direct sum of $1$-dimensional simple modules, determined by characters. For a finite dimensional module $M$, let $M_\alpha$ be the sum of all submodules isomorphic to the $1$-dimensional module corresponding to the character $\alpha$. Now let $x\in U_\zeta(N_1)_\alpha$. Then we have:
\[u_\mu xu_\lambda=\sum_{(u_\mu)}\Ad(u_{\mu,(1)})(x)u_{\mu,(2)}u_\lambda=\sum_{(u_\mu)}\alpha(u_{\mu,(1)})x\lambda(u_{\mu,(2)})u_\lambda=(\lambda+\alpha)(u_\mu)xu_\lambda\\
=\delta_{\lambda+\alpha,\mu}xu_\lambda.\]
Let $Q$ be the quiver described in the statement of the proposition. Since the simple $U_\zeta(T_1)$-modules are of form $k_\lambda$ with $\lambda\in X_{p\ell}$, it follows from the foregoing equation, that there is a unique homomorphism $\Gamma: kQ\to U_\zeta(B_1)$ satisfying $\Gamma(e_\lambda)=u_\lambda$, $\Gamma(a_{ij})=Eu_{ij}$ and $\Gamma(b_{ij})=E^{(\ell)}u_{ij}$. Since $Eu_\lambda, E^{(\ell)}u_\lambda\in \image \Gamma$ and $E=\sum_{\lambda}Eu_\lambda$ and $E^{(\ell)}=\sum_{\lambda}E^{(\ell)}u_\lambda$, we have that $\Gamma$ is surjective. Furthermore we have $\Gamma(b_{i+1,j+\xi}a_{ij}-a_{i,j+1}b_{ij})=E^{(\ell)}u_{i+1,j}Eu_{ij}-Eu_{i,j+1}E^{(\ell)}u_{i,j}=0$. Since $E$ has order $\ell$ (resp. $E^{(\ell)}$ has order $p$) we as well have that multiplications of $Eu_\lambda$ of length $\ell$ (resp. $E^{(\ell)}u_\lambda$ of length $p$) are zero. Therefore if we let $I$ be the ideal generated by the elements $b_{i+1,j+\xi}a_{ij}-a_{i,j+1}b_{ij}$ and $a_{i+\ell-1,j+(\ell-1)\xi}\cdots a_{ij}$ and $b_{i,j+p-1}\cdots b_{ij}$, then $\Gamma$ factors through a surjective morphism $\hat{\Gamma}:kQ/I\to U_\zeta(B_1)$.\\
Now let $\pi: kQ\to kQ/I$ be the canonical projection. We have $[\sum_{i,j} a_{ij},\sum_{i,j}b_{ij}]\in I$ and $(\sum a_{ij})^\ell\in I$ and $(\sum b_{ij})^p\in I$. Therefore there exists a unique algebra homomorphism $\Gamma': U_\zeta(N_1)\to kQ/I$, such that $\Gamma'(E)=\pi(\sum a_{ij})$ and $\Gamma'(E^{(\ell)})=\pi(\sum b_{ij})$. We now extend $\Gamma'$ to a linear map $\hat{\Gamma}'$ on $U_\zeta(B_1)=U_\zeta(N_1)\# U_\zeta(T_1)$ via $\hat{\Gamma}'(a\# u_\lambda):=\Gamma'(a)\pi(e_\lambda)$ for all $a\in U_\zeta(N_1)$ and all $\lambda$. Direct computation shows that this is a homomorphism of $k$-algebras.\\
Let $J$ be the Jacobson radical of $kQ/I$. Then $a_{ij}=e_{i+1,j}\sum_{j'} a_{ij'}$ and $b_{ij}=e_{i,j+1}\sum_{j'} b_{ij'}$ yields $kQ/I=\image \hat{\Gamma}'+J^2$. Now apply \cite[Proposition 1.2.8]{B95} to see that $\hat{\Gamma}'$ is surjective. As a result the two algebras have the same dimension. Therefore the two given maps are isomorphisms.
\end{proof}

\begin{thm}
Let $\ell>1$ be an odd integer not divisible by $3$ for $\mathbb{G}_2$. The connected algebra $U_\zeta(B_r)$ is wild except for $r=0$ and $\mathfrak{g}=\mathfrak{sl}_2$.
\end{thm}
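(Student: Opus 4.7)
By Corollary \ref{reptypeBr1}, the only case not already settled is $\mathfrak{g}=\mathfrak{sl}_2$ with $r=1$. My plan is to reduce wildness of $U_\zeta(B_1)$ to the classical wildness of a two-variable truncated polynomial ring, using the explicit structure supplied by the preceding proposition.

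The first step is to identify the nilpotent subalgebra. By the quantum divided power identity $E^{(n)}E^{(m)}=\begin{bmatrix}n+m\\n\end{bmatrix}E^{(n+m)}$, the generators $E$ and $E^{(\ell)}$ of $U_\zeta(N_1)$ commute. Furthermore $E^\ell=0$ since $[\ell]$ vanishes at $\zeta$, and in characteristic $p$ one has $(E^{(\ell)})^p=p!\,E^{(p\ell)}=0$. These relations together with the $p\ell$-dimensional basis $\{E^i(E^{(\ell)})^j\mid 0\leq i<\ell,\,0\leq j<p\}$ (visible in the quiver of the preceding proposition) force
\[
U_\zeta(N_1)\;\cong\;k[x,y]/(x^\ell,y^p),
\]
a commutative local Artinian algebra of dimension $p\ell$.

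The second step is to transfer wildness from $U_\zeta(N_1)$ to $U_\zeta(B_1)$. The proof of the preceding proposition exhibits $U_\zeta(T_1)=\bigoplus_\lambda ku_\lambda$ as a semisimple commutative Hopf algebra of dimension $p\ell$, so the smash product decomposition $U_\zeta(B_1)\cong U_\zeta(N_1)\#U_\zeta(T_1)$ realises $U_\zeta(B_1)$ as a free $U_\zeta(N_1)$-module of rank $p\ell$ on both sides. For such finite free extensions coming from a Hopf extension with semisimple quotient, representation type cannot decrease: induction sends a wild parametric family of $U_\zeta(N_1)$-modules to a family of $U_\zeta(B_1)$-modules whose indecomposable summands, bounded in number by $p\ell$, still retain the parameter dependence and so witness wildness of $U_\zeta(B_1)$.

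The third step is to invoke the classical wildness of the commutative local algebra $k[x,y]/(x^\ell,y^p)$ for odd $\ell\geq 3$ and $p\geq 2$. Among two-generator truncated polynomial algebras with both exponents at least $2$, only $(\ell,p)=(2,2)$ is tame; every other case is wild. The main obstacle lies precisely in the borderline case $(\ell,p)=(3,2)$: here complexity methods do not help (we have $\cx_{U_\zeta(B_1)}k\leq 2$ for $\mathfrak{sl}_2$, $r=1$), and the Gabriel quiver of $U_\zeta(B_1)$ has $\Ext^1$-space of dimension at most one between any pair of simples, so the standard quiver-combinatorial wildness criteria are inconclusive. Wildness therefore has to be established at the level of $k[x,y]/(x^\ell,y^p)$, for example by writing down an explicit two-parameter family of non-isomorphic indecomposable modules via pairs of commuting nilpotent matrices $X,Y$ satisfying $X^\ell=Y^p=0$.
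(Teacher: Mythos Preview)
Your overall strategy---reduce to the local algebra $U_\zeta(N_1)\cong k[x,y]/(x^\ell,y^p)$ and then transport wildness up to the smash product---is different from the paper's, which simply reads off a wild subquiver (with commutativity relations, citing \cite{Ung90}) from the quiver presentation of $U_\zeta(B_1)$ computed in the preceding proposition. Since that quiver is already in hand, the paper's argument is a one-line combinatorial check, whereas your route introduces two nontrivial obligations that you do not actually discharge.

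The first gap is Step~2. The direction you need is ``$A$ wild $\Rightarrow A\#H$ wild'' for $H$ semisimple, but the standard lemma (the one the paper itself invokes in the subsequent Corollary, \cite[Lemma 5.2.1]{Fa06}) gives only the \emph{opposite} implication ``$A\#H$ wild $\Rightarrow A$ wild''. Your heuristic that induced modules ``retain the parameter dependence'' is not a proof of strict wildness: induction need not preserve indecomposability, and controlling which summands appear is exactly the subtle point. The bidirectional statement for smash products is usually obtained via Cohen--Montgomery duality $(A\#H)\#H^*\cong M_{\dim H}(A)$ together with semisimplicity of $H^*$; but here $H=U_\zeta(T_1)$ is \emph{not} cosemisimple in characteristic $p$ (its dual is the group algebra of $X_{p\ell}$, which has a factor $k[\mathbb{Z}/p]$), so that route is blocked. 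If you want to salvage this step you must supply a genuine argument---e.g.\ via covering theory for the grading group---rather than an appeal to bounded numbers of summands.

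The second gap is Step~3, which you yourself flag as ``the main obstacle'': you assert that $k[x,y]/(x^3,y^2)$ is wild and say one would ``write down an explicit two-parameter family'', but you neither do so nor give a reference. This is precisely the borderline case where complexity is $2$ and the quiver of $U_\zeta(B_1)$ has no multiple arrows, so the whole proof hinges on it; leaving it as an exercise means the proof is incomplete. The paper sidesteps both issues by working directly at the level of $U_\zeta(B_1)$, where a concrete wild bound-quiver algebra can be exhibited as a quotient.
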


\begin{proof}
If $r=1$ and $\mathfrak{g}=\mathfrak{sl}_2$, the algebra contains the following subquiver with relations, indicated by the dotted lines, which is wild (see \cite{Ung90}):
\[\begin{xy}
\xymatrix{
\circ\ar[r]&\circ\ar[r]&\circ\\
&\circ\ar[u]\ar[r]\ar@{.}[ru]&\circ\ar[u]\ar[r]&\circ\\
&&\circ\ar[u]\ar[r]\ar@{.}[ru]&\circ\ar[u]
}
\end{xy}\]
\end{proof}

The following corollary was obtained independently by Feldvoss and Witherspoon (\cite[Theorem 5.4]{FW11}) for $r=0$ by using a variant of Farnsteiner's Theorem for Hochschild cohomology (\cite[Theorem 2.1]{FW11}).

\begin{cor}
Let $\ell>1$ be an odd integer not divisible by $3$ for $\mathbb{G}_2$. The local algebra $U_\zeta(N_r)$ is wild except for $r=0$ and $\mathfrak{g}=\mathfrak{sl}_2$.
\end{cor}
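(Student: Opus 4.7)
The proof parallels that of the preceding theorem, exploiting that $u_\zeta(\mathfrak{n})$ sits as a normal Hopf subalgebra of $U_\zeta(N_r)$ via the restricted weak Hopf sequence $k\to u_\zeta(\mathfrak{n})\to U_\zeta(N_r)\to \Dist(N_r)\to k$, and that $U_\zeta(N_r)$ inherits (fg) from $U_\zeta(G_r)$ by Drupieski's theorem. I split into two cases.

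For $\mathfrak{g}\neq\mathfrak{sl}_2$ I would chase complexities. First, $\cx_{u_\zeta(\mathfrak{n})}(k)\leq \cx_{U_\zeta(N_r)}(k)$ by the standard monotonicity under Hopf extensions (cf.\ \cite[Prop.~2.1(2)]{FW09}). Next, since $u_\zeta(\mathfrak{b}) = u_\zeta(\mathfrak{n})\#u_\zeta(\mathfrak{t})$ is a smash product with the semisimple Hopf algebra $u_\zeta(\mathfrak{t})\cong k(\mathbb{Z}/\ell)^n$ (using $\gcd(\ell,p)=1$), the Lyndon--Hochschild--Serre spectral sequence collapses to give $\cx_{u_\zeta(\mathfrak{n})}(k) = \cx_{u_\zeta(\mathfrak{b})}(k)$. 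Theorem \ref{borelwild} provides $\cx_{u_\zeta(\mathfrak{b})}(k)\geq 3$, so $\cx_{U_\zeta(N_r)}(k)\geq 3$, and wildness follows from \cite[Thm.~3.1]{FW09}.

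For $\mathfrak{g}=\mathfrak{sl}_2$ and $r\geq 1$ the complexity lower bound collapses, so I would instead use an explicit commutative presentation. The divided powers $E^{(p^j\ell)}$ for $0\leq j<r$ (together with $E$) are all polynomials in $E$ over the generic base $\mathbb{Q}(q)$ and hence commute after specialization. A quantum-Lucas computation then yields $U_\zeta(N_r)_{\mathfrak{sl}_2}\cong k[x_0,x_1,\ldots,x_r]/(x_0^\ell,x_1^p,\ldots,x_r^p)$, with the relation $x_0^\ell = 0$ coming from $E^\ell = [\ell]!\,E^{(\ell)} = 0$ at the root of unity and $x_j^p = 0$ from $(E^{(p^{j-1}\ell)})^p = p!\,E^{(p^j\ell)} = 0$ in characteristic $p$. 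For $r\geq 2$ the embedding dimension $r+1\geq 3$ forces the quotient $U_\zeta(N_r)/\rad^2$ to be a local algebra with three commuting nilpotents, whose module category is equivalent to representations of the three-arrow Kronecker quiver and is thus wild; pulling back gives wildness of $U_\zeta(N_r)$. For $r=1$, $U_\zeta(N_1)_{\mathfrak{sl}_2}\cong k[x,y]/(x^\ell,y^p)$ is wild by classical results on commutative local algebras, since the only tame member of $k[x,y]/(x^a,y^b)$ with $a,b\geq 2$ is $(a,b)=(2,2)$, which is excluded by the hypothesis that $\ell$ is odd.

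The main technical hurdle is the complexity equality $\cx_{u_\zeta(\mathfrak{n})}(k)=\cx_{u_\zeta(\mathfrak{b})}(k)$, which rests on the collapse of the Hochschild--Serre spectral sequence for the semisimple Hopf extension $u_\zeta(\mathfrak{n})\to u_\zeta(\mathfrak{b})\to u_\zeta(\mathfrak{t})$; the remainder is essentially bookkeeping of quantum-binomial specializations together with standard wildness criteria for commutative local algebras of large embedding dimension.
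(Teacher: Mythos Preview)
Your argument has a genuine gap in the case $\mathfrak{g}\neq\mathfrak{sl}_2$: you invoke \cite[Theorem~3.1]{FW09} to pass from $\cx_{U_\zeta(N_r)}k\geq 3$ to wildness, but that theorem is stated for (fg)-\emph{Hopf} algebras, and $U_\zeta(N_r)$ is not a Hopf algebra. The comultiplication of $E_i$ in $U_\zeta(\mathfrak{g})$ involves $K_i$, so the subalgebra generated by the $E_i$'s and their divided powers is not a Hopf subalgebra; the paper says this explicitly in the introduction (``The latter is not a Hopf algebra''). For the same reason, your appeal to a ``restricted weak Hopf sequence'' $k\to u_\zeta(\mathfrak{n})\to U_\zeta(N_r)\to \Dist(N_r)\to k$ does not fit the framework set up in Section~1, and \cite[Proposition~2.1(2)]{FW09} is not available as written. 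This is exactly why Feldvoss and Witherspoon needed the Hochschild variant \cite[Theorem~2.1]{FW11} to handle $u_\zeta(\mathfrak{n})$ in the $r=0$ case, as the paper remarks just before the corollary.

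The paper's proof sidesteps this entirely and is much shorter: it uses the smash product decomposition $U_\zeta(B_r)=U_\zeta(N_r)\# U_\zeta(T_r)$ with $U_\zeta(T_r)$ semisimple, and then \cite[Lemma~5.2.1]{Fa06} (wildness of a smash product $A\# H$ with $H$ semisimple forces wildness of $A$) transfers the wildness of $U_\zeta(B_r)$, already established in the preceding theorem, directly down to $U_\zeta(N_r)$. No complexity bound for $U_\zeta(N_r)$ itself is needed, and no Hopf structure on $U_\zeta(N_r)$ is required. Your explicit description $U_\zeta(N_r)_{\mathfrak{sl}_2}\cong k[x_0,\dots,x_r]/(x_0^\ell,x_1^p,\dots,x_r^p)$ and the ensuing wildness argument for the $\mathfrak{sl}_2$ case is correct and more concrete than the paper's uniform treatment, but it is not needed once the smash-product transfer is in place.
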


\begin{proof}
We have that $U_\zeta(N_r)$ is a finite dimensional algebra, which is a $U_\zeta(T_r)$-module algebra, and $U_\zeta(B_r)=U_\zeta(N_r)\# U_\zeta(T_r)$. Therefore the wildness of $U_\zeta(B_r)$ implies the wildness of $U_\zeta(N_r)$ by \cite[Lemma 5.2.1]{Fa06} since $U_\zeta(T_r)$ is semisimple.
\end{proof}

\section{Representation type of blocks of quantum groups}

This section provides a proof of the main result of the paper, the determination of the representation type of the blocks of Frobenius-Lusztig kernels.

\begin{prop}[{\cite[Proposition 8.2.1]{BNPP11}}, {\cite[Theorem 6.1]{Dru10}}]\label{cxh0}
Let $\mathfrak{g}$ be a finite dimensional complex simple Lie algebra. Let $\ell>1$ be an odd integer not divisible by $3$ if $\Phi$ is of type $\mathbb{G}_2$. Furthermore for $\charac k=0$ assume that $\ell$ is good for $\Phi$ (i.e. $\ell\geq 3$ for type $\mathbb{B}_n$, $\mathbb{C}_n$ and $\mathbb{D}_n$, $\ell\geq 5$ for type $\mathbb{E}_6$, $\mathbb{E}_7$ and $\mathbb{G}_2$ and $\ell\geq 7$ for $\mathbb{E}_8$) and that $\ell>3$ if $\Phi$ is of type $\mathbb{B}_n$ or $\mathbb{C}_n$. For $\charac k=p>0$ assume that $p$ is good for $\Phi$ and that $\ell>h$. Let $\zeta$ be a primitive $\ell$-th root of unity and let $\lambda\in X_\ell^+$, then $\cx_{u_\zeta(\mathfrak{g})} H^0(\lambda)\geq 2\cx_{u_\zeta(\mathfrak{b})} H^0(\lambda)\geq |\Phi|-|\Phi_\lambda|$.
\end{prop}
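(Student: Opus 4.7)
The plan is to convert both complexity bounds into statements about support varieties and then estimate their dimensions geometrically. Under the stated hypotheses on $\ell$ (and on $p$ in the modular case), both $u_\zeta(\mathfrak{g})$ and $u_\zeta(\mathfrak{b})$ satisfy (fg), so by Theorem \ref{cxdimsupporthochschild}(i) one has $\cx_{u_\zeta(\mathfrak{g})}(M)=\dim V_{u_\zeta(\mathfrak{g})}(M)$ and analogously for the Borel. The Ginzburg--Kumar identification $\Spec H^\bullet(u_\zeta(\mathfrak{g}),k)_{\mathrm{red}}\cong \mathcal{N}(\mathfrak{g})$ with the nilpotent cone, and $\Spec H^\bullet(u_\zeta(\mathfrak{b}),k)_{\mathrm{red}}\cong \mathfrak{n}$, gives concrete ambient varieties of dimensions $|\Phi|$ and $|\Phi^+|$ respectively in which to work. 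The good-prime and $\ell>h$ assumptions are precisely what guarantees these identifications are available in the desired generality.

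For the second inequality I would use a rank-variety description of $V_{u_\zeta(\mathfrak{b})}(M)$: a point $x\in\mathfrak{n}$ lies outside $V_{u_\zeta(\mathfrak{b})}(H^0(\lambda))$ exactly when the associated root-vector subalgebra acts freely on $H^0(\lambda)$. Using the explicit Weyl-type character of $H^0(\lambda)$ and the linkage principle, a weight-space calculation should show that for $\alpha\in\Phi^+\setminus\Phi_\lambda^+$ the root vector $E_\alpha$ acts freely (it moves $\lambda$ out of its facet), whereas root vectors for $\alpha\in\Phi_\lambda^+$ obstruct projectivity. This will exhibit a linear subspace of $V_{u_\zeta(\mathfrak{b})}(H^0(\lambda))$ of dimension $|\Phi^+|-|\Phi_\lambda^+|=\tfrac12(|\Phi|-|\Phi_\lambda|)$, yielding the bound $\cx_{u_\zeta(\mathfrak{b})}H^0(\lambda)\geq |\Phi^+|-|\Phi_\lambda^+|$.

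For the first inequality I would exploit that the inclusion $u_\zeta(\mathfrak{b})\hookrightarrow u_\zeta(\mathfrak{g})$ induces, at the level of cohomology spectra, the embedding $\mathfrak{n}\hookrightarrow\mathcal{N}$, and under it $V_{u_\zeta(\mathfrak{b})}(M)\subseteq V_{u_\zeta(\mathfrak{g})}(M)\cap\mathfrak{n}$. Because $H^0(\lambda)$ is naturally a module for the big quantum group $U_\zeta(\mathfrak{g})$, its $u_\zeta(\mathfrak{g})$-support variety is $G$-stable, hence contains the saturation $\overline{G\cdot V_{u_\zeta(\mathfrak{b})}(H^0(\lambda))}$. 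Since the subvariety found in the previous paragraph contains a point lying in a generic $B$-orbit in $\mathfrak{n}$ of full dimension, a dimension count along the Springer resolution $G\times^B\mathfrak{n}\twoheadrightarrow\mathcal{N}$ shows that $G$-saturation doubles dimensions, giving $\dim V_{u_\zeta(\mathfrak{g})}(H^0(\lambda))\geq 2\dim V_{u_\zeta(\mathfrak{b})}(H^0(\lambda))$.

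The main obstacle will be the identification of the rank variety of $H^0(\lambda)$ over $u_\zeta(\mathfrak{b})$ with a subvariety of $\mathfrak{n}$ of the claimed dimension, since this requires the full machinery of support varieties for small quantum groups (rank-variety versus cohomological-variety comparison, and the PBW-type basis of $H^0(\lambda)$ together with a careful analysis of how root subalgebras act on it). The doubling argument depends on geometric input about $G$-saturation of subvarieties of $\mathfrak{n}$ inside $\mathcal{N}$ and on $H^0(\lambda)$ having a good filtration over $u_\zeta(\mathfrak{g})$, which is precisely where the restrictions on $\ell$, $p$, and $h$ become essential; weakening these hypotheses would require strengthening either the geometric step or the Ginzburg--Kumar identification.
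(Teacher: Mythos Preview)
The paper does not supply its own proof of this proposition: it is quoted verbatim from \cite[Proposition 8.2.1]{BNPP11} (characteristic zero) and \cite[Theorem 6.1]{Dru10} (positive characteristic), and is used as a black box in the proof of Theorem~\ref{quantumwild}. So there is no argument in the paper to compare against.

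Your sketch follows, in broad outline, the strategy those references actually carry out (support varieties via Ginzburg--Kumar, then a geometric dimension count), but two steps in your outline are not justified as written. First, the ``doubling'' claim --- that $G$-saturation of a subvariety of $\mathfrak{n}$ inside $\mathcal{N}$ doubles its dimension --- is false in general; it holds for $\mathfrak{u}_J$ with $J$ a subset of simple roots (via Richardson's dense-orbit theorem for the Levi decomposition of the corresponding parabolic), and the cited references exploit precisely this by first identifying $V_{u_\zeta(\mathfrak{b})}(H^0(\lambda))$ with such a $\mathfrak{u}_J$, not merely bounding it from below by an arbitrary linear subspace. Your lower bound alone would not feed into the saturation step. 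Second, the rank-variety description you invoke for $u_\zeta(\mathfrak{b})$ is considerably more delicate in the quantum setting than you indicate; the actual computation of $V_{u_\zeta(\mathfrak{b})}(H^0(\lambda))$ in the cited sources proceeds instead through an explicit calculation of $\Ext^\bullet_{u_\zeta(\mathfrak{b})}(H^0(\lambda),H^0(\lambda))$ using the Lyndon--Hochschild--Serre spectral sequence and Kostant's theorem, not via a freeness criterion for individual root vectors.
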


\begin{thm}\label{quantumwild}
Let $\mathfrak{g}$ be a finite dimensional complex simple Lie algebra. Let $\ell>1$ be an odd integer not divisible by $3$ if $\Phi$ is of type $\mathbb{G}_2$. Furthermore for $\charac k=0$ assume that $\ell$ is good for $\Phi$ and that $\ell>3$ if $\Phi$ is of type $\mathbb{B}_n$ or $\mathbb{C}_n$. For $\charac k=p>0$ assume that $p$ is good for $\Phi$ and that $\ell>h$. Then any block of $u_\zeta(\mathfrak{g})$ different from the Steinberg block is of wild representation type unless $\mathfrak{g}=\mathfrak{sl}_2$.
\end{thm}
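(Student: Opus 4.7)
The plan is to exhibit, in every non-Steinberg block $B$ of $u_\zeta(\mathfrak{g})$, an indecomposable module of complexity strictly greater than $2$, and then to apply the wildness criterion of Feldvoss--Witherspoon \cite[Theorem 3.1]{FW09}. Since $u_\zeta(\mathfrak{g})$ satisfies (fg) under the present hypotheses on $\ell$ and $\charac k$, that criterion is available. The natural test modules are the quantized induced modules $H^0(\lambda)$ for $\lambda\in X_\ell^+$: by Section 3 they are indecomposable, and by the linkage principle all their composition factors lie in the block containing $L(\lambda)$. Hence if I pick $\lambda$ so that $L(\lambda)\in B$, then $H^0(\lambda)\in B$ and Proposition \ref{cxh0} furnishes the lower bound
\[
\cx_{u_\zeta(\mathfrak{g})} H^0(\lambda)\;\geq\;|\Phi|-|\Phi_\lambda|.
\]

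Next I would verify that $\Phi_\lambda\subsetneq \Phi$ whenever $B$ is not the Steinberg block. The equality $\Phi_\lambda=\Phi$ forces $(\lambda+\rho,\alpha^\vee)\in \ell\mathbb{Z}$ for every $\alpha\in \Phi$, so $\lambda+\rho\in \ell X$; among restricted dominant weights the only solution is $\lambda=(\ell-1)\rho$, which lies in the Steinberg block $\mathcal{B}^{spec}$ described in Section 3. A short linear-algebra computation shows that $\Phi_\lambda$ is in fact a closed sub-root system of $\Phi$: if $\alpha,\beta\in \Phi_\lambda$ and $s_\alpha(\beta)\in \Phi$, then
\[
(\lambda+\rho,s_\alpha(\beta)^\vee)=(\lambda+\rho,\beta^\vee)-(\lambda+\rho,\alpha^\vee)(\alpha,\beta^\vee)\in \ell\mathbb{Z}.
\]

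The remaining step, which I expect to be the main obstacle, is the purely combinatorial claim that for every irreducible root system $\Phi\neq A_1$, each proper closed sub-root system $\Phi'\subsetneq \Phi$ satisfies $|\Phi|-|\Phi'|\geq 4$. This is a short type-by-type check using Borel--de Siebenthal (removing a node from the extended Dynkin diagram to obtain the maximal closed subsystems, and then iterating), together with a quick look at the non-maximal cases: in each simple type other than $A_1$ the largest proper closed subsystem differs from $\Phi$ by at least four roots. Since $|\Phi|-|\Phi_\lambda|$ is automatically even (as $\Phi_\lambda=-\Phi_\lambda$), combining the steps gives $\cx_{u_\zeta(\mathfrak{g})} H^0(\lambda)\geq 4>2$, and \cite[Theorem 3.1]{FW09} then forces $B$ to be wild, completing the proof.
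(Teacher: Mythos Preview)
Your plan is correct and follows the same strategy as the paper: both pick an $H^0(\lambda)$ in the given block, invoke Proposition~\ref{cxh0} to bound $\cx_{u_\zeta(\mathfrak{g})}H^0(\lambda)$ from below by $|\Phi|-|\Phi_\lambda|$, and then argue that this quantity exceeds $2$ unless $\lambda=(\ell-1)\rho$, so that \cite[Theorem 3.1]{FW09} applies.

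The only difference is in the combinatorial step. The paper computes directly that $\Phi_\lambda$ is closed under \emph{integer linear combinations} of two elements that land in $\Phi$ (this is precisely where the oddness of $\ell$, and $3\nmid\ell$ for $\mathbb{G}_2$, are used), and then observes that in rank $\geq 2$ a subset missing at most one positive root still spans $\mathbb{R}\Phi$, forcing $\Phi_\lambda=\Phi$. Your computation, by contrast, only shows that $\Phi_\lambda$ is stable under the reflections $s_\alpha$ for $\alpha\in\Phi_\lambda$; this is weaker than additive closedness, so Borel--de~Siebenthal (which classifies \emph{closed} subsystems) is not literally the right tool. That said, the bound you need holds for arbitrary reflection-stable subsets: if $\Phi$ is irreducible of rank $\geq 2$ and $\Phi'\subsetneq\Phi$ satisfies $s_\alpha\Phi'=\Phi'$ for all $\alpha\in\Phi'$, then $|\Phi\setminus\Phi'|$ is even and cannot equal $2$, since $\Phi\setminus\Phi'=\{\pm\gamma\}$ would force $s_\beta(\gamma)=\pm\gamma$ for every $\beta\in\Phi'$, i.e.\ $\gamma\perp\Phi\setminus\{\pm\gamma\}$, contradicting irreducibility. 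So your plan goes through without any case-by-case check. The paper's route is marginally more direct and makes explicit where the parity hypotheses on $\ell$ enter; yours avoids that computation at the cost of a small extra argument about sub-root systems.
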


\begin{proof}
Assume to the contrary that a block of $u_\zeta(\mathfrak{g})$ is tame or representation-finite. This in particular implies that $\cx_{u_\zeta(\mathfrak{g})} H^0(\lambda)\leq 2$ for all $\lambda\in X_\ell^+$ corresponding to this block. This implies that $\cx_{u_\zeta(\mathfrak{b})}H^0(\lambda)\leq\frac{1}{2}\cx_{u_\zeta(\mathfrak{g})}H^0(\lambda)\leq 1$ by Proposition \ref{cxh0}. By Lemma \ref{cxh0+} this leads to $|\Phi^+\setminus \Phi_\lambda^+|\leq 1$. Since $\mathfrak{g}$ is simple, $\Phi_\lambda^+$ then spans $\Phi^+$ unless $\mathfrak{g}=\mathfrak{sl}_2$ (cf. \cite[Corollaire après VI.1.3 Théorème 1 (ii)]{B02}). The following easy calculation shows that with $\alpha_1, \alpha_2\in \Phi^+_\lambda$ and $a_1,a_2\in \mathbb{Z}$ such that $a_1\alpha_1+a_2\alpha_2\in \Phi^+$, also $a_1\alpha_1+a_2\alpha_2\in \Phi^+_\lambda$: Let $(\lambda+\rho, \alpha_1^\vee)=r\ell$ and $(\lambda+\rho,\alpha_2^\vee)=s\ell$, then
\begin{align*}
(\lambda+\rho, (a_1\alpha_1+a_2\alpha_2)^\vee)&=(\lambda+\rho, \frac{2(a_1\alpha_1+a_2\alpha_2)}{(a_1\alpha_1+a_2\alpha_2,a_1\alpha_1+a_2\alpha_2)})\\
&=\frac{a_1(\alpha_1,\alpha_1)}{(a_1\alpha_1+a_2\alpha_2,a_1\alpha_1+a_2\alpha_2)}r\ell+\frac{a_2(\alpha_2,\alpha_2)}{(a_1\alpha_1+a_2\alpha_2,a_1\alpha_1+a_2\alpha_2)}s\ell\\
&=\frac{ra_1\cdot (\alpha_1,\alpha_1)+sa_2\cdot (\alpha_2,\alpha_2)}{(a_1\alpha_1+a_2\alpha_2,a_1\alpha_1+a_2\alpha_2)}\cdot \ell\equiv 0\mod \ell,
\end{align*}
The last congruence follows from the fact that the whole expression is an integer and $\ell$ is odd and $3\nmid \ell$ for $\mathbb{G}_2$ and there are at most two root length differing by a factor of $2$ or $3$ for $\mathbb{G}_2$ (cf. \cite[Proposition 12 (i)]{B02}). Therefore the first factor already has to be an integer. Thus we have $\Phi_\lambda^+=\Phi^+$. This in particular implies that the simple roots $\alpha_1,\dots,\alpha_n$ are contained in $\Phi_\lambda^+$. But $( \rho, \alpha_i^\vee)=1$ for any simple root $\alpha_i$. Let $\lambda_1,\dots,\lambda_n$ be the fundamental dominant weights corresponding to $\alpha_1,\dots,\alpha_n$. We have $(\lambda_i,\alpha_j^\vee)=\delta_{ij}$. Therefore $\lambda=(\ell -1)\lambda_i\mod \ell \lambda_i, \lambda_j (j\neq i)$ for any $1\leq i\leq n$. Thus we have $\lambda=(\ell-1)\rho$. This means that the block is the Steinberg module.
\end{proof}

The representation theory of blocks of $u_\zeta(\mathfrak{sl}_2)$ is known. All blocks except the Steinberg are Morita equivalent and the quiver and relations are known (cf. \cite{S94}, \cite{Xia97}, \cite{CPm94}). These blocks satisfy very restrictive properties. To list them, let us recall some more definitions:\\

A representation-infinite algebra was said to be tame if for each dimension there is a finite number of one-parameter families classifying all modules of this dimension. A tame algebra is called domestic if there is a common bound for this finite number over all dimensions.\\

A particular class of tame (not necessarily domestic) algebras is provided by the following: An algebra $A$ is called special biserial, if $A$ is Morita equivalent to a bound quiver algebra $kQ/I$, where $Q$ and $I$ satisfy the following conditions:
\begin{enumerate}
\item[(SB1)] In each point of the quiver there start at most two arrows.
\item[(SB1')] In each point of the quiver there end at most two arrows.
\item[(SB2)] Whenever there are arrows $\alpha,\beta,\gamma$ with $s(\alpha)=s(\beta)=t(\gamma)$, we have $\alpha\gamma\in I$ or $\beta\gamma\in I$.
\item[(SB2')] Whenever there are arrows $\alpha,\beta,\gamma$ with $t(\alpha)=t(\beta)=s(\gamma)$, we have $\gamma\alpha\in I$ or $\gamma\beta \in I$.
\end{enumerate}

This class of algebras is well-studied, e.g. a complete classification of the indecomposable modules and the Auslander-Reiten quiver is known (see \cite{WW85}).\\

In our case we have the following properties:

\begin{cor}
Impose the same restriction on the parameters as in the foregoing theorem. Then the following are equivalent for a block $B$ of a small quantum group $u_\zeta(\mathfrak{g})$:
\begin{enumerate}[(1)]
\item $B$ is tame.
\item $B$ is domestic.
\item $B$ is special biserial.
\item $B$ is Morita equivalent to the trivial extension of the path algebra of the Kronecker quiver by the dual bimodule.
\item There exists an induced module $H^0(\lambda)$ for $B$ with $\cx_B H^0(\lambda)=2$.
\item There exists a simple $B$-module $S$ such that $\cx_B S=2$
\end{enumerate}
\end{cor}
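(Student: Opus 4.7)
My plan is to organize the six implications into a structural loop linking $(1)$--$(4)$ and then connect the cohomological conditions $(5)$ and $(6)$ via Proposition~\ref{cxh0} and Theorem~\ref{quantumwild}.

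First I would handle $(4) \Rightarrow (3) \Rightarrow (1)$ and $(4) \Rightarrow (2) \Rightarrow (1)$. For $(4) \Rightarrow (3)$ one writes out the trivial extension $T(kQ) = kQ \oplus D(kQ)$ of the Kronecker path algebra: its Gabriel quiver has two vertices with two parallel arrows in each direction, subject to the zero- and commutativity-relations inherited from the trivial extension, and a direct check confirms (SB1), (SB1'), (SB2), (SB2'). Then $(3) \Rightarrow (1)$ is the classical theorem of Wald and Waschb\"usch that special biserial algebras are tame. For $(4) \Rightarrow (2)$, this specific trivial extension is well-known to be $1$-parameter domestic (it is one of the standard tame symmetric algebras of polynomial growth in Erdmann's list), and $(2) \Rightarrow (1)$ is immediate from the definitions. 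For $(1) \Rightarrow (4)$, Theorem~\ref{quantumwild} reduces us to $\mathfrak{g} = \mathfrak{sl}_2$ (the Steinberg block is one-dimensional, hence representation-finite and not tame), and the explicit descriptions of the non-Steinberg blocks of $u_\zeta(\mathfrak{sl}_2)$ given in \cite{S94}, \cite{Xia97}, \cite{CPm94} identify such a block precisely with this trivial extension.

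Next I would address the cohomological conditions. Both $(4) \Rightarrow (5)$ and $(4) \Rightarrow (6)$ reduce to a direct calculation in $u_\zeta(\mathfrak{sl}_2)$: in any non-Steinberg block the minimal projective resolutions of the simples and of the induced modules $H^0(\lambda)$ grow linearly of rank one (equivalently, $\Omega^2$ is periodic up to a polynomial factor), giving complexity exactly $2$. For $(5) \Rightarrow (1)$, Proposition~\ref{cxh0} yields $2 = \cx_B H^0(\lambda) \geq |\Phi| - |\Phi_\lambda|$, so $|\Phi^+ \setminus \Phi_\lambda^+| \leq 1$; the root-system argument from the proof of Theorem~\ref{quantumwild} then forces either $\mathfrak{g} = \mathfrak{sl}_2$, or $\Phi_\lambda^+ = \Phi^+$ together with $\lambda = (\ell-1)\rho$, i.e.\ the Steinberg block. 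The latter is excluded by $\cx_B H^0(\lambda) > 0$, so $(1)$ holds.

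The main obstacle is $(6) \Rightarrow (1)$, because Proposition~\ref{cxh0} provides a lower bound for $\cx_B H^0(\lambda)$ but not for $\cx_B L(\lambda)$. My plan is to pass from $\cx_B L(\lambda) = 2$ to $\cx_B H^0(\lambda) = 2$ by showing that, for a block $B$ of an algebra satisfying (fg), all simple modules in $B$ share a common support variety, which in turn coincides with the support variety of $H^0(\mu)$ for any $\mu$ whose composition factors lie in $B$. This would reduce $(6)$ to $(5)$. The equality of support varieties across a block should follow from the subadditivity of support varieties on short exact sequences combined with the fact that any two simples in $B$ are connected by a chain of non-split extensions. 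The delicate point is to verify that the classical Farnsteiner arguments for cocommutative Hopf algebras transfer to the non-cocommutative setting of $u_\zeta(\mathfrak{g})$; here I would exploit the symmetry of $u_\zeta(\mathfrak{g})$ established in the preceding Proposition and the antipode-twisting it makes available, so that the whole argument can be run as in the cocommutative case.
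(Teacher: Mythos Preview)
Your treatment of the equivalence of (1)--(5) is essentially the paper's approach, just spelled out: both reduce via Theorem~\ref{quantumwild} to $\mathfrak{g}=\mathfrak{sl}_2$ and then invoke the explicit description of the non-Steinberg blocks from \cite{S94}, \cite{Xia97}, \cite{CPm94}. The paper compresses this to a single sentence (``follows easily from the representation theory of $u_\zeta(\mathfrak{sl}_2)$''), so your cycle of implications is a legitimate elaboration rather than a different method.

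The genuine gap is in your plan for $(6)\Rightarrow(1)$. The assertion that all simple modules in a block of an (fg)-algebra share a common support variety is false in general, and your proposed justification via subadditivity cannot establish it: from a non-split extension $0\to S_1\to E\to S_2\to 0$ one gets $V(E)\subseteq V(S_1)\cup V(S_2)$ and its cyclic variants, but none of these force $V(S_1)=V(S_2)$ or even $\dim V(S_1)=\dim V(S_2)$. Already for group algebras and restricted enveloping algebras there are blocks whose simples have support varieties of different dimensions. Hence you cannot pass from $\cx_B S=2$ for one simple $S$ to $\cx_B H^0(\lambda)=2$, and condition~(5) is not recovered this way.

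The paper does not attempt such a block-uniformity argument. For $(6)$ it proceeds in two steps: under the additional hypotheses $\ell>h$ and $(\ell,r)=1$ for all bad primes $r$ of $\Phi$, it cites the explicit computation of the support varieties of the simple $u_\zeta(\mathfrak{g})$-modules in \cite[Theorem~3.3]{DNP10}; for the parameters of Theorem~\ref{quantumwild} in full generality it appeals to a quantum analogue of \cite[Proposition~5.1]{Fa00b}, whose proof is deferred to a forthcoming paper. In short, the equivalence with~(6) rests on an actual determination of $\cx L(\lambda)$, not on a formal principle relating varieties of simples across a block.
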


\begin{proof}
The equivalence of the first five statements follows easily from the representation theory of $u_\zeta(\mathfrak{sl}_2)$. For $\ell>h$ and $(l,r)=1$ for all bad primes $r$ of $\Phi$ the equivalence to the last statement follows from the explicit computation of the support variety of the simples in \cite[Theorem 3.3]{DNP10}. But the statement is true more generally by a quantum analogue of \cite[Proposition 5.1]{Fa00b}, that we will prove in a forthcoming paper.
\end{proof}

\begin{lem}
Let $\ell>1$ be an odd root of unity and let $\charac k\geq 3$. For $\lambda\neq \mu\in X_\ell$ let $L(\lambda), L(\mu)$ be two simple $u_\zeta(\mathfrak{sl}_2)$-modules belonging to the same block. Then $\Ext^1_{u_\zeta(\mathfrak{sl}_2)}(L(\lambda),L(\mu))\cong L(1)$ as $\Dist(\SL(2)_r)$-modules.
\end{lem}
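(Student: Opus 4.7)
The plan proceeds in four steps. First I would reduce to the case $\mu = \ell - 2 - \lambda$ via the linkage principle for $u_\zeta(\mathfrak{sl}_2)$: in each non-Steinberg block there are exactly two simples, and their highest weights sum to $\ell - 2$ (see \cite{S94}, \cite{Xia97}, \cite{CPm94}). The Steinberg simple $L(\ell-1)$ is projective and forms its own block, so the hypothesis $\lambda \neq \mu$ forces both highest weights to lie strictly below $\ell - 1$.

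Second, the dimension of the Ext group follows from the well-known Loewy structure of the projective cover: $P(\lambda)$ is $2\ell$-dimensional with radical series $L(\lambda)/(L(\mu) \oplus L(\mu))/L(\lambda)$, so $\Omega L(\lambda) = \rad P(\lambda)$ has top $L(\mu)^{\oplus 2}$ and consequently
$$\dim_k \Ext^1_{u_\zeta(\mathfrak{sl}_2)}(L(\lambda), L(\mu)) = 2.$$

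Third, the $\Dist(\SL(2)_r)$-module structure arises naturally from the weak Hopf sequence $k \to u_\zeta(\mathfrak{sl}_2) \to U_\zeta(\SL(2)_r) \to \Dist(\SL(2)_r) \to k$: since both $L(\lambda)$ and $L(\mu)$ are restrictions of $U_\zeta(\SL(2)_r)$-modules, the functorial mechanism described in the proof of Lemma \ref{extsimpleextensionHopf} (where $\Hom_B(E,V)^{[-1]}$ is shown to be a $C$-module for any $A$-modules $E,V$, and the derived functors inherit the same structure) equips $\Ext^1_{u_\zeta}(L(\lambda),L(\mu))$ with a natural $\Dist(\SL(2)_r)$-action on which $u_\zeta(\mathfrak{sl}_2)$ acts trivially.

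Fourth, to identify this two-dimensional $\Dist(\SL(2)_r)$-module with $L(1)$, I would lift the projective cover $P(\lambda)$ to a $U_\zeta(\SL(2)_r)$-module: this is available because, for $\ell > h = 2$, the $u_\zeta$-projective indecomposables are restrictions of tilting modules for $U_\zeta(G)$ (cf.\ \cite[p.~15]{And04}). In such a lift, the two copies of $L(\mu)$ in the second radical layer can be canonically distinguished by applying the bridging divided powers $E^{(\ell)}$ and $F^{(\ell)}$ to the highest-weight vector of the top $L(\lambda)$: these produce elements of $U_\zeta(T)$-weights $\lambda + 2\ell$ and $\lambda - 2\ell$, which after Frobenius twist project to $\Dist(T_r)$-weights $+1$ and $-1$ respectively, matching precisely the two weights of the standard module $L(1)$. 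A direct computation with the quantum Serre-type relations in $U_\zeta(G)$ then shows that the higher divided-power generators $E^{(p^i\ell)}, F^{(p^i\ell)}$ (which project to the standard generators of $\Dist(\SL(2)_r)$) exchange the two extension classes up to nonzero scalars. A two-dimensional $\Dist(\SL(2)_r)$-module with torus weights $\pm 1$ on which the nilpotent generators act nontrivially is simple of highest weight $1$, hence isomorphic to $L(1)$.

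The main obstacle is the explicit lift of $P(\lambda)$ to $U_\zeta(\SL(2)_r)$ together with the concrete bookkeeping of the divided-power action on its second radical layer: one must carefully manage the transition between restricted weights modulo $\ell$ in $u_\zeta(\mathfrak{sl}_2)$, full integer weights in $U_\zeta(\SL(2)_r)$, and Frobenius-twisted weights in $\Dist(\SL(2)_r)$. An alternative route would be a small Lyndon--Hochschild--Serre spectral sequence comparing $\Ext^*_{u_\zeta(\mathfrak{sl}_2)}$ with $\Ext^*_{U_\zeta(\SL(2)_r)}$, which bypasses the explicit lift but requires independent input on the $U_\zeta(\SL(2)_r)$-level Ext groups.
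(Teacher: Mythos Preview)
Your strategy---lift $P(\lambda)$ to a $U_\zeta(\SL(2)_r)$-module and read off the $\Dist(\SL(2)_r)$-action on the second radical layer---is the same as the paper's, but step four contains a concrete error. The lift of $P(\lambda)$ is the tilting module of highest weight $2\ell-2-\lambda$, so its integral weights lie in the interval $[-(2\ell-2-\lambda),\,2\ell-2-\lambda]$. If $v$ is a weight-$\lambda$ vector mapping to the highest-weight vector of the top $L(\lambda)$, then $E^{(\ell)}v$ would have weight $\lambda+2\ell>2\ell-2-\lambda$ and $F^{(\ell)}v$ would have weight $\lambda-2\ell<-(2\ell-2-\lambda)$; both are therefore zero. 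So the divided powers do \emph{not} produce the two $L(\mu)$-summands as you claim. The correct observation is that the two $u_\zeta$-copies of $L(\mu)$ have highest-weight vectors at integral weights $\mu\pm\ell$, coming from the Steinberg decomposition $L(2\ell-2-\lambda)\cong L(1)^{[1]}\otimes L(\mu)$, and it is this factor $L(1)^{[1]}$ that carries the $\Dist(\SL(2)_r)$-structure.

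The paper avoids any explicit divided-power computation by using the Weyl filtration instead of the radical filtration. The short exact sequence $0\to W(2\ell-2-\lambda)\to P(\lambda)\to L(\lambda)\to 0$ of $U_\zeta(\SL(2)_r)$-modules induces a long exact sequence in $\Hom_{u_\zeta}(-,L(\mu))$ which is automatically a sequence of $\Dist(\SL(2)_r)$-modules; since $P(\lambda)$ is $u_\zeta$-projective this collapses to an isomorphism $\Ext^1_{u_\zeta}(L(\lambda),L(\mu))\cong\Hom_{u_\zeta}(W(2\ell-2-\lambda),L(\mu))$. The $U_\zeta$-top of $W(2\ell-2-\lambda)$ is $L(2\ell-2-\lambda)=L(1)^{[1]}\otimes L(\mu)$, and every $u_\zeta$-map to $L(\mu)$ factors through this top, so the Ext-group is identified with $\Hom_{u_\zeta}(L(1)^{[1]}\otimes L(\mu),L(\mu))\cong L(1)$ in one stroke. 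Your argument can be repaired along exactly these lines: replace the erroneous $E^{(\ell)},F^{(\ell)}$ computation by the observation that, as a $U_\zeta$-module, $\rad P(\lambda)/\rad^2 P(\lambda)\cong L(1)^{[1]}\otimes L(\mu)$.
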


\begin{proof}
It is known that the projective $u_\zeta(\mathfrak{sl}_2)$-module $P(\lambda)$ is a uniserial $U_\zeta(\mathfrak{sl}_2)$-module with composition factors $L(\lambda),L(2\ell-2-\lambda),L(\lambda)$ in that order: Being a tilting module $P(\lambda)$ has a filtration by Weyl modules, where the Weyl module to the highest weight $\lambda$ occurs exactly once. Furthermore $P(\lambda)|_{u_\zeta(\mathfrak{sl}_2)}$ has composition factors of weight $(\lambda, \ell-2-\lambda, \ell-2-\lambda, \lambda)$. The Weyl modules corresponding to the restricted weights are known to be simple, the others are uniserial of length $2$, see e.g. \cite[Example 11.2.7]{CP94}. But as the Weyl module for $\lambda$ occurs exactly once, the Weyl filtration has to have factors $W(2\ell-2-\lambda)$ (with simple top $L(2\ell-2-\lambda)$ ) and $W(\lambda)$ (see also \cite[p. 202-203]{CK05} for a computational argument in characteristic zero). The sequence $0\to W(2\ell-2-\lambda)\to P(\lambda)\to L(\lambda)\to 0$ induces a long exact $\Ext$-sequence, which is a sequence of $\Dist(\SL(2)_r)$-modules (the action of $\Dist(\SL(2)_r)$ on the $\Ext$-groups is defined in the proof of Lemma \ref{extsimpleextensionHopf}):
\begin{align*}
0&\to \Hom_{u_\zeta(\mathfrak{sl}_2)}(L(\lambda),L(\mu))\to \Hom_{u_\zeta(\mathfrak{sl}_2)}(P(\lambda),L(\mu))\to \Hom_{u_\zeta(\mathfrak{sl}_2)}(W(2\ell-2-\lambda),L(\mu))\\
&\to \Ext^1_{u_\zeta(\mathfrak{sl}_2)}(L(\lambda),L(\mu))\to \Ext^1_{u_\zeta(\mathfrak{sl}_2)}(P(\lambda),L(\mu)).
\end{align*}
Since $P(\lambda)$ is the $u_\zeta(\mathfrak{sl}_2)$-projective cover of $L(\lambda)$ we have that \[\Ext^1_{u_\zeta(\mathfrak{sl}_2)}(L(\lambda),L(\mu))\cong \Hom_{u_\zeta(\mathfrak{sl}_2)}(W(2\ell-2-\lambda),L(\mu)).\]
But $\Kopf_{u_\zeta(\mathfrak{sl}_2)} W(2\ell-2-\lambda)=\Kopf_{U_\zeta(\SL(2)_r)} W(2\ell-2-\lambda)=L(1)^{[1]}\otimes L(\mu)$, where $L(1)^{[1]}$ denotes the twodimensional simple $\Dist(\SL(2)_r)$-module $L(1)$ viewed as a $U_\zeta(\SL(2)_r)$-module. Thus the canonical projection $W(2\ell-2-\lambda)\to \Kopf W(2\ell-2-\lambda)$ is a $U_\zeta(\SL(2)_r)$-module homomorphism. Hence $\Hom_{u_\zeta(\mathfrak{sl}_2)}(\Kopf W(2\ell-2-\lambda),L(\mu))\to \Hom_{u_\zeta(\mathfrak{sl}_2)}(W(2\ell-2-\lambda),L(\mu))$ is a $U_\zeta(\SL(2)_r)$-module monomorphism on which $u_\zeta(\mathfrak{sl}_2)$ acts trivially, i.e. a $\Dist(\SL(2)_r)$-module isomorphism, since they have the same dimension. But $\Hom_{u_\zeta(\mathfrak{sl}_2)}(L(1)^{[1]}\otimes L(\mu),L(\mu))\cong L(1)$.
\end{proof}

The foregoing lemma can be used to compute the quiver of $U_\zeta(\SL(2)_1)$ by applying Lemma \ref{extsimpleextensionHopf}. To determine the representation type of the algebra it is only necessary to compute a small part of it. This will be done in the next statement.

\begin{prop}
Impose the same restriction on the parameters as in the foregoing theorem. The algebra $U_\zeta(G_r)$ is of wild representation type except for $r=0$ and $G=\SL(2)$.
\end{prop}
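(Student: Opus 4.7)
The plan is to split the proof on whether $\mathfrak{g}=\mathfrak{sl}_2$. The case $r=0$, $\mathfrak{g}\neq\mathfrak{sl}_2$ is already handled by Theorem~\ref{quantumwild} (which provides a wild block), so the work lies in (a) $\mathfrak{g}\neq\mathfrak{sl}_2$ with $r\geq 1$ and (b) $\mathfrak{g}=\mathfrak{sl}_2$ with $r\geq 1$. For (a) I would run the complexity argument of Corollary~\ref{reptypeBr1}: since $u_\zeta(\mathfrak{g})$ sits inside $U_\zeta(G_r)$ as a Hopf subalgebra, \cite[Proposition 2.1(2)]{FW09} gives $\cx_{U_\zeta(G_r)}M\geq \cx_{u_\zeta(\mathfrak{g})}M$ for every $U_\zeta(G_r)$-module $M$. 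Inspection of the proof of Theorem~\ref{quantumwild} shows that every $\lambda\in X_\ell^+$ different from $(\ell-1)\rho$ satisfies $\cx_{u_\zeta(\mathfrak{g})}H^0(\lambda)\geq 3$ under the present hypotheses on $\ell$ and $\charac k$. Since $H^0(\lambda)$ is defined on $U_\zeta(G)$ and so restricts to $U_\zeta(G_r)$, this yields $\cx_{U_\zeta(G_r)}H^0(\lambda)\geq 3$. Combining (fg) for $U_\zeta(G_r)$ (\cite[Theorem 6.2.6]{Dru11}) with \cite[Theorem 3.1]{FW09} produces a wild block of $U_\zeta(G_r)$, hence wildness of $U_\zeta(G_r)$ itself.

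For (b), the bound $\cx_{u_\zeta(\mathfrak{sl}_2)}H^0(\lambda)\leq 2$ (from Proposition~\ref{cxh0}, as $|\Phi|=2$) forbids the complexity argument, so I would pass to a direct quiver computation, in the spirit of the $U_\zeta(B_r)$ proof for $\mathfrak{sl}_2$. Apply Lemma~\ref{extsimpleextensionHopf} to the weak Hopf sequence $k\to u_\zeta(\mathfrak{sl}_2)\to U_\zeta(\SL(2)_r)\to\Dist(\SL(2)_r)\to k$ (the required vanishing $\Ext^1_{u_\zeta(\mathfrak{sl}_2)}(L(\lambda),L(\lambda))=0$ follows from the known structure of the non-Steinberg blocks, which have two simples and no loops) together with the foregoing lemma: for $\lambda\neq\mu$ lying in the same block one obtains
\[
\dim_k\Ext^1_{U_\zeta(\SL(2)_r)}\bigl(M_1^{[1]}\otimes L(\lambda),\,M_2^{[1]}\otimes L(\mu)\bigr)=\dim_k\Hom_{\Dist(\SL(2)_r)}\bigl(M_1,\,M_2\otimes L(1)\bigr).
\]
Because $M_2\otimes L(1)$ decomposes richly by Clebsch--Gordan for $u(\mathfrak{sl}_2)=\Dist(\SL(2)_1)$, and inductively for higher $r$, this yields many arrows between simples of the form $M^{[1]}\otimes L(\lambda)$. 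From the resulting portion of the Gabriel quiver I would extract an explicit wild subquiver with relations---modelled on the hexagonal configuration used in the wildness proof of $U_\zeta(B_1)$ for $\mathfrak{sl}_2$---appearing convexly in the Gabriel quiver of $U_\zeta(\SL(2)_r)$, and invoke Unger's list of minimal wild quivers with relations.

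The genuinely delicate step is case (b): one must verify that the exhibited subquiver, together with the relations it inherits from $U_\zeta(\SL(2)_r)$, is really wild and not accidentally tame. The inherited relations are manageable thanks to two inputs: the foregoing lemma only produces arrows between simples with distinct $L(\lambda)$-component, and the quiver of the non-Steinberg blocks of $u_\zeta(\mathfrak{sl}_2)$ is explicitly known (two vertices, Kronecker-like). Once this verification is carried out, combining the two cases establishes wildness of $U_\zeta(G_r)$ whenever $(G,r)\neq(\SL(2),0)$.
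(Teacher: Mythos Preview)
Your strategy for case (a) has a genuine gap: to apply \cite[Theorem 3.1]{FW09} you need (fg) for $U_\zeta(G_r)$, and you cite \cite[Theorem 6.2.6]{Dru11} for this, but that result concerns $U_\zeta(B_r)$. For $U_\zeta(G_r)$ the paper explicitly remarks (immediately after this proposition) that (fg) ``is only proven in some instances in \cite[Theorem 5.3.4]{Dru11}'', and indeed the very next proposition is stated conditionally on it. So the complexity-plus-Farnsteiner route is not available in the generality claimed. The paper sidesteps this entirely by passing \emph{up} rather than \emph{down}: since $U_\zeta(G_r)//u_\zeta(\mathfrak{g})\cong\Dist(G_r)$ and wildness lifts along algebra surjections, it suffices to quote the classical fact that $\Dist(G_r)$ is wild unless $G=\SL(2)$ and $r\leq 1$. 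This single observation disposes of all of your case (a) and simultaneously of $\mathfrak{sl}_2$ with $r\geq 2$, leaving only $G=\SL(2)$, $r=1$.

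For that remaining case your plan is in the right spirit but misses the shortest path. You concentrate on the $L_1\ncong L_2$ branch of Lemma~\ref{extsimpleextensionHopf} (arrows between distinct $L(\lambda)$-components) and aim for a hexagonal configuration whose inherited relations must then be checked against Unger's list --- a step you yourself flag as delicate. The paper instead exploits \emph{both} branches of the lemma: the $L_1\cong L_2$ case gives $\Ext^1_{U_\zeta(\SL(2)_1)}(L(\mu)^{[1]}\otimes L(\lambda),L(\mu')^{[1]}\otimes L(\lambda))\cong\Ext^1_{\Dist(\SL(2)_1)}(L(\mu),L(\mu'))$, and for suitable $\mu,\mu'$ this is two-dimensional by \cite{Fis82}, producing a double arrow in the Gabriel quiver. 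Adjoining a single arrow from the $L_1\ncong L_2$ case (via the foregoing lemma) yields the subquiver $\begin{xy}\xymatrix{\circ\ar@<2pt>[r]\ar@<-2pt>[r]&\circ&\circ\ar[l]}\end{xy}$, which forces wildness by \cite[I.10.8 (iii)]{Er90} with no relation analysis required.
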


\begin{proof}
Using the fact that $U_\zeta(G_r)//u_\zeta(\mathfrak{g})\cong \Dist(G_r)$ one concludes that $G=\SL(2)$ and $r\leq 1$ since if these assumptions do not hold, $\Dist(G_r)$ is known to be wild.\\
For $r=1$ we have by Lemma \ref{extsimpleextensionHopf} that if none of the involved simples is the Steinberg module for the corresponding algebra, then for given $\mu'$ there exists $\mu$, such that we have that $\dim \Ext^1_{U_\zeta(\SL(2)_r)}(L(\mu)^{[1]}\otimes L(\lambda'), L(\mu')^{[1]}\otimes L(\lambda'))=2$ for all $\lambda$.
Furthermore we have \[\dim \Ext^1(L(\mu)^{[1]}\otimes L(\lambda),L(\mu')^{[1]}\otimes L(\lambda'))=\delta_{\lambda',\ell-2-\lambda}(\delta_{\mu',\mu-1}+\delta_{\mu',\mu+1})\] for $\mu'\neq p-1$ by \cite[Kapitel 5, Satz]{Fis82} and Lemma \ref{extsimpleextensionHopf}. Thus the quiver of $U_\zeta(\SL(2)_1)$ contains a subquiver of the form
$\begin{xy}\xymatrix{\circ\ar@<2pt>[r]\ar@<-2pt>[r]&\circ&\circ\ar[l]}\end{xy}$. Therefore the algebra is wild by \cite[I.10.8 (iii)]{Er90}.
\end{proof}

We can even show more in the case that $U_\zeta(G_r)$ satisfies (fg), which is only proven in some instances in \cite[Theorem 5.3.4]{Dru11} and conjectured to be true in much more generality (cf. \cite[Conjecture 2.18]{EO04}).

\begin{prop}
Assume that $U_\zeta(G_r)$ satisfies (fg). If there is a tame block, then $\mathfrak{g}=\mathfrak{sl}_2$ and either $r=0$ or it is induced by a block of $\Dist(\SL(2)_1)$, i.e. the corresponding simple modules have weight $(\ell-1+p\mu)$.
\end{prop}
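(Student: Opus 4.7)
The plan is to adapt the complexity-theoretic proof of Theorem~\ref{quantumwild} to the relative setting of the weak Hopf sequence $k \to u_\zeta(\mathfrak{g}) \to U_\zeta(G_r) \to \Dist(G_r) \to k$, and then to finish by applying the special-block reduction of Proposition~\ref{specialblock}.

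Fix a tame block $\mathcal{B}$ of $U_\zeta(G_r)$ and a simple module $L(\mu)^{[1]} \otimes L(\lambda) \in \mathcal{B}$, with $\mu \in X_{p^r}^+$ and $\lambda \in X_\ell^+$. Since $U_\zeta(G_r)$ satisfies (fg), the Feldvoss--Witherspoon theorem \cite[Theorem~3.1]{FW09} forces $\cx_{U_\zeta(G_r)} N \leq 2$ for every $N \in \mathcal{B}$. I would apply this to $N := L(\mu)^{[1]} \otimes H^0(\lambda)$, which lies in $\mathcal{B}$ because its composition factors all have the form $L(\mu)^{[1]} \otimes L(\lambda')$ with $L(\lambda')$ in the $u_\zeta(\mathfrak{g})$-block of $L(\lambda)$. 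Restricting along the Hopf subalgebra $u_\zeta(\mathfrak{g}) \subseteq U_\zeta(G_r)$ using \cite[Proposition~2.1(2)]{FW09} and the isomorphism $N|_{u_\zeta(\mathfrak{g})} \cong H^0(\lambda)^{\dim L(\mu)}$ then yields
\[
\cx_{u_\zeta(\mathfrak{g})} H^0(\lambda) \;\leq\; \cx_{U_\zeta(G_r)}(N) \;\leq\; 2.
\]
From here Proposition~\ref{cxh0} and Lemma~\ref{cxh0+} imply $|\Phi^+ \setminus \Phi^+_\lambda| \leq 1$, and the root-system computation already done in the proof of Theorem~\ref{quantumwild} forces either $\mathfrak{g} = \mathfrak{sl}_2$ or $\lambda = (\ell-1)\rho$.

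If $\lambda = (\ell-1)\rho$, then Proposition~\ref{specialblock} identifies $\mathcal{B}$ with a block of $\Dist(G_r)$. Arguing as in the proof of Corollary~\ref{reptypeBr1}, the subalgebra $\Dist(G_{a(r)}) \cong k[X_1,\ldots,X_r]/(X_1^p,\ldots,X_r^p) \hookrightarrow \Dist(G_r)$ pushes the complexity of the trivial $\Dist(G_r)$-module to at least $r$, so blocks of $\Dist(G_r)$ are wild for $r \geq 2$; for $r = 1$ the classification of the blocks of $\Dist(G_1)$ forces $G = \SL(2)$, which in turn yields the weights $\ell - 1 + p\mu$. In the remaining subcase $\mathfrak{g} = \mathfrak{sl}_2$ with $\lambda \neq \ell-1$ and $r \geq 1$, I would recycle the Ext-quiver calculation from the previous proposition: by Lemma~\ref{extsimpleextensionHopf} combined with the preceding lemma the block of $L(\mu)^{[1]} \otimes L(\lambda)$ in $U_\zeta(\SL(2)_r)$ contains the wild subquiver $\circ \rightrightarrows \circ \leftarrow \circ$, contradicting tameness, so $r = 0$ and we fall back on Theorem~\ref{quantumwild}.

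The main obstacle will be verifying that $L(\mu)^{[1]} \otimes H^0(\lambda)$ really lies in $\mathcal{B}$, which amounts to a linkage statement for $U_\zeta(G_r)$ saying that Frobenius-twisting preserves $U_\zeta(G_r)$-block membership of composition factors. Should this be inconvenient to cite cleanly in the needed generality, one could instead bound $\cx_{u_\zeta(\mathfrak{g})} L(\lambda)$ by restricting $L(\mu)^{[1]} \otimes L(\lambda)$ itself and replace Proposition~\ref{cxh0} by the explicit support-variety computation for simples in \cite{DNP10}, at the price of importing their additional hypotheses on $\ell$.
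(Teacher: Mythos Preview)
Your approach is essentially the paper's own. Both arguments rest on the same two ingredients: (i) the complexity bound obtained by restricting induced modules to $u_\zeta(\mathfrak{g})$, which forces $\mathfrak{g}=\mathfrak{sl}_2$ outside the Steinberg situation, and (ii) the $\Ext$-quiver computation from the previous proposition, which for $\mathfrak{sl}_2$ and $r\geq 1$ produces the wild subquiver $\circ\rightrightarrows\circ\leftarrow\circ$ in every block containing a simple with $\lambda\neq\ell-1$. The paper compresses this into two sentences, simply saying that the induced modules have complexity too big for $\mathfrak{g}\neq\mathfrak{sl}_2$ and that the foregoing proposition already exhibits each $L(p-1)^{[1]}\otimes L(\lambda)$, $\lambda\neq\ell-1$, inside a block shown to be wild; the weights $\ell-1+p\mu$ are then what remains.

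The one organisational difference is that you handle the Steinberg case $\lambda=(\ell-1)\rho$ for general $\mathfrak{g}$ separately, via Proposition~\ref{specialblock} and the representation type of $\Dist(G_r)$, whereas the paper never enters this branch: it dispatches $\mathfrak{g}\neq\mathfrak{sl}_2$ in one stroke by the complexity argument and only afterwards, already inside $\mathfrak{sl}_2$, lets the special block emerge as the residual case. Your route is slightly more explicit but requires importing the block-level tame/wild classification for $\Dist(G_1)$; the paper avoids this by reaching $\mathfrak{g}=\mathfrak{sl}_2$ first. The block-membership issue you flag for $L(\mu)^{[1]}\otimes H^0(\lambda)$ is real but is equally implicit in the paper's phrase ``restricting the induced modules''; either way it is resolved by Lemma~\ref{extsimpleextensionHopf}, which links $L(\mu)^{[1]}\otimes L(\lambda')$ to $L(\mu')^{[1]}\otimes L(\lambda)$ whenever $L(\lambda')$ and $L(\lambda)$ lie in the same $u_\zeta(\mathfrak{g})$-block.
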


\begin{proof}
Restricting the induced modules to $u_\zeta(\mathfrak{g})$, we see that their complexity is too big in case $\mathfrak{g}\neq \mathfrak{sl}_2$. In the case of $\mathfrak{g}=\mathfrak{sl}_2$ the wildness of the remaining blocks follows from the proof of the foregoing proposition if we observe that the argument in the foregoing proof already shows that each of the simple modules of the form $L(p-1)^{[1]}\otimes L(\lambda)$ corresponds to one of the blocks, that we have already shown to be wild for $\lambda\neq \ell-1$.
\end{proof}

\section*{Acknowledgement}
The results of this article are part of my doctoral thesis, which I am currently writing at the University of Kiel. I would like to thank my advisor Rolf Farnsteiner for his continuous support, especially for answering my questions on the classical case. I also would like to thank Chris Drupieski for answering some of my questions on Frobenius-Lusztig kernels, especially on restrictions of the parameter. Furthermore I thank the members of my working group for proof reading.

\bibliographystyle{alpha}
\bibliography{publication}

\end{document}